\documentclass[12pt]{article}
\usepackage{amsmath,amsfonts,amssymb,amsthm,amscd}
\title{Topological dynamics and definable groups}
\date{January 11, 2012}
\author{Anand Pillay\thanks{Supported
by EPSRC grant EP/I002294/1}\\University of Leeds}
\newtheorem{Theorem}{Theorem}[section]
\newtheorem{Proposition}[Theorem]{Proposition}
\newtheorem{Definition}[Theorem]{Definition} 
\newtheorem{Remark}[Theorem]{Remark}
\newtheorem{Lemma}[Theorem]{Lemma}

\newtheorem{Fact}[Theorem]{Fact}

\begin{document}
\maketitle

\begin{abstract} 
We give a commentary on Newelski's suggestion or conjecture \cite{Newelski1} that topological dynamics, in the sense of Ellis \cite{Ellis}, applied to the action of a definable group $G(M)$ on its type space $S_{G}(M)$, can explain, account for, or give rise to, the quotient 
$G/G^{00}$, at least for suitable groups in $NIP$ theories. We give a positive answer for measure-stable (or $fsg$) groups in $NIP$ theories. As part of our analysis we show the existence of ``externally definable" generics of $G(M)$ for measure-stable groups. 
We also point out that for $G$ definably amenable (in a $NIP$ theory) $G/G^{00}$ can be recovered, via the Ellis theory, from a natural Ellis semigroup structure on the space of global $f$-generic types. 
\end{abstract}

\section{Introduction and preliminaries}
This paper concerns the relationship between two ``theories" or ``bits of mathematics". On the one hand that of a  group $G$ and its actions, by homeomorphisms, on compact spaces, i.e. abstract topological dynamics. On the other hand, that of the existence and properties of a certain canonical quotient $G/G^{00}$ for $G$ a group definable in a saturated model of a (suitable) first order theory $T$. \\

This relationship has been explored in a series of papers by Newelski, including \cite{Newelski1} and \cite{Newelski2} which are most relevant to the considerations of this paper. For {\em stable groups}, namely groups definable in stable theories,   
there is a good match, which we will briefly recall below, and the issue is whether this  extends to more general contexts. \\

A subtext of this paper as well as of Newelski's work is whether there exists a reasonably robust theory of {\em definable topological dynamics}, namely of actions of {\em definable} groups  on compact spaces.  
For example in the same way as amenability of a (discrete) group $G$ is equivalent to the existence of a $G$-invariant Borel probability measure on the compact space $\beta G$ (under the natural action of $G$), {\em definable} amenability of a group $G$ (definable in some theory $T$), as defined in \cite{NIPI} for example, is equivalent to the existence of a $G(M)$-invariant Borel probability measure on the type space $S_{G}(M)$, for $M$ some (any) model of $T$. It might then be natural to call a definable group {\em definably extremely amenable} if if for a saturated model $M$ of $T$ the action of $G(M)$ on $S_{G}(M)$ has a fixed point. And it would be also natural to ask (by analogy) whether this is equivalent to the action of $G(M)$ on $S_{G}(M)$ having a fixed point, for {\em some} model $M$ of $T$. When $T$ is stable this is indeed the case, and is equivalent to $G$ being {\em connected}. On the other hand, the Ellis theory suggests that it might be better to consider the  space $S_{G,ext}(M)$ of external types (with the natural action of $G(M)$) as being the {\em definable} analogue of $\beta G$.  
Exploration of these issues will be left to subsequent work.\\

Let us now give a brief description of the problem as posed by Newelski and of our main results, where definitions will be given later.  To begin with let $T$ be a complete first order theory with $NIP$ say,  and $G$ a $\emptyset$-definable group. Let $M$ be {\em any} model of $T$, not necessarily saturated, and $X = S_{G}(M)$ the Stone space of complete types over $M$ concentrating on $G$. So $G(M)$ acts on $X$ on the left say, by homeomorphisms. Let $(E(X),\cdot)$ be the enveloping Ellis semigroup of $X$, $I$ a minimal left ideal of $E(X)$ and $u\in I$ an idempotent. The Ellis theory yields that $u\cdot I$ is a group and the question is whether this group coincides with $G/G^{00}$. We will give a positive answer (Theorem 3.8) when the group $G$ is {\em measure-stable}. In all cases $E(X)$ coincides with $S_{G,ext}(M)$, the Stone space of the Boolean algebra of ``externally definable" subsets of $G(M)$, and in the measure-stable case, we will also show the existence of ``generics" of $S_{G,ext}(M)$ and in fact point out a one-one correspondence between these external generics over $M$ and global generic types of $G$ (Theorem 3.4). In the special case where $G$ is a definably compact group in an $o$-minimal theory, these results were obtained by Newelski 
\cite{Newelski2}. We also discuss briefly in section 2 a natural Ellis semigroup structure on the space of global $f$-generic types for $G$ a definably amenable group in an $NIP$ theory and point out that $G/G^{00}$ coincides with $u\cdot I$ ($I$ a closed left ideal and $u\in I$ an idempotent). See Proposition 2.5.
\\

In the rest of this introduction we  describe key aspects of the model-theoretic and topological dynamics contexts, as well as their interaction. We will be repeating some observations from \cite{Newelski1}, \cite{Newelski2}, but hopefully this will help to popularize the nice ideas. \\

$T$ will denote a complete first order theory in a language $L$ which for simplicity will be assumed to be countable. $x,y,..$ will usually range over finite tuples of variables.  
${\bar M}$ will usually denote a saturated model of  $T$ (say $\kappa$-saturated of cardinality $\kappa$ where $\kappa$ is inaccessible). $G$ will usually denote a $\emptyset$-definable group, often identified with its points $G({\bar M})$ in ${\bar M}$. However sometimes we pass to a larger saturated model ${\bar M}'$ in which types over ${\bar M}$ can be realized.
In general ``definability" means with parameters unless stated otherwise. 
For a model $M$, $S_{G}(M)$ denotes the set (space) of complete types $p(x)$ over $M$ which contain the formula $\phi(x)$ say which defines $G$. Identifying $G(M)$ with the collection of ``realized types" in $S_{G}(M)$, we see that $G(M)$ is a dense subset of $S_{G}(M)$.

As usual we often identify a formula with the set in defines in ${\bar M}$.
\\

We first recall $G^{00}$. 
 Let $A$ be a ``small" set of parameters from ${\bar M}$. Then there is a smallest type-definable over $A$ subgroup of $G$ which has index $< \kappa$ (equivalently index at most 
$2^{|A| + \omega}$). We call this group $G_{A}^{00}$. The quotient map $G\to G/G_{A}^{00}$ factors through the type space $S_{G}(M)$ for some (any) small model $M$ containing $A$ and equips $G/G_{A}^{00}$ with the structure of a compact (Hausdorff) topological group. When $T$ has $NIP$ (see below), $G_{A}^{00}$
does not depend on $A$ so coincides with $G_{\emptyset}^{00}$ and we simply call it $G^{00}$. So the compact group $G/G^{00}$ is a basic invariant of the definable group $G$. \\

Fix a model $M$. By an {\em externally definable} subset of $G(M)$ we mean a set of form $X\cap G(M)$ where $X$ is a definable subset of $G$ (defined with parameters possibly outside $M$). The collection of externally definable subsets of $G(M)$ is a Boolean algebra and we denote its Stone space by $S_{G,ext}(M)$, the space of ``external types" over $M$ concentrating on $G$. Let $S_{G,M}({\bar M})$ denote the (closed) subset of $S_{G}({\bar M})$ consisting of types which are finitely satisfiable in $M$.

\begin{Fact}(i) Let $p(x)\in S_{G,ext}(M)$. Then the collection of definable subsets $X$ of $G$ such that $X\cap G(M) \in p(x)$ is a complete type in $S_{G,M}({\bar M})$ which we call $p^{\bar M}$.
\newline
(ii) The map taking $p$ to $p^{\bar M}$ establishes a homeomorphism between $S_{G,ext}(M)$ and $S_{G,M}({\bar M})$. 

\end{Fact}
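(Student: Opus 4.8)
The plan is to treat a point $p$ of $S_{G,ext}(M)$ as an ultrafilter on the Boolean algebra $\mathcal{B}$ of externally definable subsets of $G(M)$, and to read off from it a global type. For part (i), define $p^{\bar M}$ to be the collection of all subsets $X$ of $G$ definable over ${\bar M}$ with $X \cap G(M) \in p$. First I would check completeness: for any definable $X$, the sets $X \cap G(M)$ and $(G \setminus X)\cap G(M)$ are complementary elements of $\mathcal{B}$, so exactly one lies in the ultrafilter $p$, whence exactly one of $X,\, G\setminus X$ lies in $p^{\bar M}$. Next I would verify finite satisfiability in $M$: given $X_{1},\dots,X_{n} \in p^{\bar M}$, each $X_{i} \cap G(M) \in p$, so their intersection lies in $p$ and is in particular nonempty, producing a point of $G(M)$ realizing all the $X_{i}$. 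Finite satisfiability in $M$ yields consistency, so $p^{\bar M}$ is a complete type concentrating on $G$ and finitely satisfiable in $M$, i.e. a point of $S_{G,M}({\bar M})$.

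For the bijectivity in part (ii), injectivity is immediate: distinct ultrafilters $p \neq q$ are separated by some external set $X \cap G(M)$, and then the definable set $X$ separates $p^{\bar M}$ from $q^{\bar M}$. For surjectivity, given $q \in S_{G,M}({\bar M})$ I would define $p := \{\, X \cap G(M) : X \in q\,\}$ and check that this is a well-defined ultrafilter on $\mathcal{B}$ with $p^{\bar M} = q$. The one point requiring care --- and the crux of the whole argument --- is well-definedness: if two definable sets $X, X'$ satisfy $X \cap G(M) = X' \cap G(M)$, I must show $X \in q \Leftrightarrow X' \in q$. Here finite satisfiability in $M$ is exactly what is needed: were $X \in q$ and $X' \notin q$, completeness of $q$ would give $X \setminus X' \in q$, yet $X \setminus X'$ is a definable set with no points in $G(M)$, contradicting finite satisfiability. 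Granting this, it is routine that $p$ is a filter closed under complements, hence an ultrafilter, and that $p^{\bar M} = q$.

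Finally, to obtain the homeomorphism I would work directly with the clopen bases. The basic clopen subsets of $S_{G,ext}(M)$ are the sets $[A] = \{p : A \in p\}$ for $A \in \mathcal{B}$, and those of $S_{G,M}({\bar M})$ are the traces of the sets $\{q : X \in q\}$ for $X$ definable over ${\bar M}$. The computation ``$p^{\bar M} \in \{q : X \in q\}$ iff $X \cap G(M) \in p$ iff $p \in [X \cap G(M)]$'' shows that the bijection carries basic clopen sets to basic clopen sets in both directions, so it is a homeomorphism; alternatively one may invoke that a continuous bijection of compact Hausdorff spaces is a homeomorphism. The only genuine obstacle is the well-definedness step above, which is precisely where the hypothesis of finite satisfiability in $M$ enters; every other step is a formal manipulation of ultrafilters and Stone duality.
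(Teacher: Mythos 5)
Your proof is correct; the paper states this as a Fact without giving any argument, and your Stone-duality reading of it --- with finite satisfiability in $M$ doing the work both for the consistency of $p^{\bar M}$ in part (i) and for the well-definedness step in the surjectivity argument of part (ii) --- is exactly the standard justification the author is taking for granted. You have correctly isolated the one non-formal point (that $X\cap G(M)=X'\cap G(M)$ forces $X\in q\Leftrightarrow X'\in q$ for $q$ finitely satisfiable in $M$), so there is nothing to add.
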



By a {\em Keisler measure} $\mu$ on $G$ over $M$ we mean a finitely additive probability measure on the collection of subsets of $G$ defined over $M$, or equivalently on the collection of definable subsets of $G(M)$. When $M = {\bar M}$ we speak of a global Keisler measure. $G$ is said to be {\em definably amenable} if it has a global left-invariant Keisler measure. From section 5 of \cite{NIPI} $G$ is definably amenable if and only if for some model $M$ there is a $G(M)$-invariant Keisler measure on $G$ over $M$.\\

A definable subset $X$ of $G$ is said to be left generic if finitely many left translates of by elements of $G$ cover $G$. Likewise for right generic. A type $p(x)\in S_{G}(M)$ is said to (left, right) generic if every formula in $p$ is.\\

In the body of this paper we will consider suitable groups $G$ in an $NIP$ theory $T$.  $T$ is said to be (or have) $NIP$ if for any formula $\phi(x,y)$, indiscernible sequence $(a_{i}:i<\omega)$, and $b$ the truth value of $\phi(a_{i},b)$ stabilizes as $i \to \infty$. If $T$ has $NIP$ then for any definable group $G$, $G^{00}$ exists (i.e. does not depend on the choice of a parameter set $A$). See \cite{NIPI} and \cite{NIPII} for background on $NIP$ theories.  A very special case of an $NIP$ theory is a stable theory, and by a {\em stable group} one just means a group definable in a stable theory. A characteristic property of a stable theory $T$ is that {\em externally definable sets are definable}. 
Much of the work on definable groups in $NIP$ theories attempts to generalize aspects of the stable case. See Chapter 1 of 
\cite{Pillay-book} for an exposition of stable group theory. In a stable group, left generic coincides with right generic (we just say generic) and generic types exist. Also $G^{00}$ coincides with $G^{0}$, the intersection of all $\emptyset$-definable subgroups of finite index, whereby $G/G^{00}$ is a profinite group. Moreover what one might call the ``fundamental theorem of stable group theory" is:
\begin{Fact} ($T$ stable.) Let $M$ be any model. Then the set $S_{G,gen}(M)$ of generic types, a closed subset of $S_{G}(M)$, is homeomorphic to $G/G^{0}$.
\end{Fact}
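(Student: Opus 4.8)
The plan is to exhibit an explicit continuous bijection $\pi\colon S_{G,gen}(M)\to G/G^{0}$ and then finish by a compactness argument. First I would note that $S_{G,gen}(M)$ is closed in $S_{G}(M)$: a type is non-generic exactly when it contains $\neg\phi$ for some non-generic formula $\phi$, so $S_{G,gen}(M)$ is the intersection of the clopen sets $\{p:\neg\phi\in p\}$ over non-generic $\phi$, hence closed and so compact Hausdorff, as is the profinite group $G/G^{0}$. Since the quotient map $G\to G/G^{0}$ factors through $S_{G}(M)$ (as recalled above, using $G^{0}=G^{00}$ in the stable case), one gets a continuous map $S_{G}(M)\to G/G^{0}$ sending $p$ to the coset $aG^{0}$ of any realization $a\models p$; let $\pi$ be its restriction to $S_{G,gen}(M)$. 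As a continuous bijection between compact Hausdorff spaces is a homeomorphism, it then suffices to prove that $\pi$ is a bijection.

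For surjectivity I would argue by density. The translation action of $G(M)$ on $S_{G}(M)$ preserves genericity, and $\pi$ intertwines it with left translation on $G/G^{0}$ by the image of $G(M)$. Fixing one generic $p_{0}$ (generics exist) with image $c_{0}$, the image of $\pi$ contains the orbit $\{\bar g\,c_{0}:g\in G(M)\}$, where $\bar g$ is the image of $g$. Since $G(M)$ meets every coset of each $\emptyset$-definable finite-index subgroup $H$, its image is dense in $G/G^{0}=\varprojlim G/H$; hence this orbit is dense, and as $\mathrm{im}(\pi)$ is compact, hence closed, it is all of $G/G^{0}$.

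The essential point, and the main obstacle, is injectivity: each coset of $G^{0}$ contains a \emph{unique} $M$-generic type. Here I would pass to global types and use stationarity. Because $M$ is a model of a stable theory, every $p\in S_{G,gen}(M)$ has a unique nonforking global extension $\bar p$; the standard facts that generics do not fork over $M$ and that nonforking extensions of generics stay generic identify $S_{G,gen}(M)$, via $p\mapsto\bar p$ and restriction, with the set of global generic types, and $\pi(p)$ is the coset of $\bar p$. Thus it is enough to show that each coset contains at most one \emph{global} generic. Translating by a representative lying in $\bar M$ (available by saturation, since the cosets of the bounded-index subgroup $G^{0}$ are type-definable) reduces this to the identity coset, i.e.\ to uniqueness of the generic of the connected group $G^{0}$. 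For that I would take independent realizations $a\models\bar p$ and $b\models\bar q$ of two such global generics and compute $\mathrm{tp}(ab/\bar M)$ in two ways: since $a\in G^{0}$ stabilizes $\bar q$ on the left one gets $ab\models\bar q$, while since $b\in G^{0}$ stabilizes $\bar p$ on the right (legitimate because in a stable group left-generic $=$ right-generic, so generics are two-sided $G^{0}$-invariant and $G^{0}\trianglelefteq G$) one gets $ab\models\bar p$; hence $\bar p=\bar q$, and so $p=q$.

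Assembling these, $\pi$ is a continuous bijection between compact Hausdorff spaces and therefore a homeomorphism. The delicate ingredients are all classical stable group theory---existence and nonforking of generics, the two-sided $G^{0}$-invariance of generic types, and stationarity of types over models---for which I would cite Chapter 1 of \cite{Pillay-book}; no topological dynamics enters at this stage.
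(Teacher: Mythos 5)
The paper does not prove this statement: it is presented as a recalled \emph{Fact} (the ``fundamental theorem of stable group theory''), with Chapter 1 of \cite{Pillay-book} cited as the reference, so there is no in-paper argument to compare against. Your proof is essentially the classical one from stable group theory and is correct in all essentials: closedness of $S_{G,gen}(M)$ from the ideal property of non-generic formulas, the continuous map to $G/G^{0}$ induced by the factoring of $G\to G/G^{0}$ through the type space, surjectivity via density of the image of $G(M)$ in the profinite group $G/G^{0}$, injectivity via stationarity over models plus uniqueness of the global generic of the connected component (the ``compute $\mathrm{tp}(ab/\bar M)$ two ways'' argument, which does need the two-sidedness of generics that you correctly flag), and the compact-Hausdorff bijection-implies-homeomorphism finish. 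One slip of wording: a type is non-generic exactly when it \emph{contains} some non-generic formula $\phi$ (not when it contains $\neg\phi$); the conclusion you draw, that $S_{G,gen}(M)=\bigcap\{p:\neg\phi\in p\}$ over non-generic $\phi$, is nevertheless the right one. A minor alternative for surjectivity, closer to how the textbook treatment runs, is to note that each coset of $G^{0}$ is an intersection of generic $M$-definable sets, so by the ideal property of non-generics it carries a generic type directly; your density-plus-closed-image argument is an acceptable substitute.
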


In section 2 we will consider definably amenable groups in $NIP$ theories, and in section 3, what we will call {\em measure-stable} groups in $NIP$ theories. The latter also go under the name of $fsg$ groups or groups generically stable for measure. They are now seen to be the right generalization of {\em stable group} in the $NIP$ setting. Definitions will be given in section 3. \\

We finish these model-theoretic preliminaries with a discussion of ``invariant" types (and forking). Suppose $M_{0}$ is a (small) model, $M>M_{0}$ is saturated with respect to $M_{0}$ (e.g. $M = {\bar M}$), and $p(x)\in S(M)$. We say that $p$ is {\em $M_{0}$-invariant} if for any $L$-formula $\phi(x,y)$ and $b\in M$, whether or not $\phi(x,b)\in p(x)$ depends only on $tp(b/M_{0})$. If $N>M$ is a bigger model, we can then define a canonical extension $p|N \in S(N)$ of $p(x)$, by defining for $\phi(x,y)\in L$ and $b\in N$, $\phi(x,b)$ to be in $p|N$ if and only if for some (any) $b'\in M$ realizing $tp(b/M_{0})$, $\phi(x,b')\in p$. 
An important example of an $M_{0}$-invariant type is $p(x)\in S(M)$ which is {\em finitely satisfiable} in $M_{0}$. \\

When $T$ is $NIP$, $p(x)\in S(M)$ is $M_{0}$-invariant if and only if $p(x)$ {\em does not fork over $M_{0}$}. The latter means that whenever $\phi(x,b)\in p(x)$ and $(b_{0}, b_{1}, b_{2},....)$ is an indiscernible over $M_{0}$-sequence with $b_{0} = b$ then $\{\phi(x,b_{i}):i<\omega\}$
is consistent. In a stable theory $T$ these notions give rise to a notion of {\em independence} with good properties. For example, $tp(a/M_{0},b)$ does not fork over $M_{0}$ if and only if $tp(b/M_{0},a)$ does not fork over $M_{0}$ and we say $a$ and $b$ are independent over $M_{0}$.\\





We now pass to topological dynamics. Our references are \cite{Ellis} as well as \cite{AAG}. 
\begin{Definition} (i) By an {\em Ellis semigroup} we mean a semigroup $(S,\cdot)$ which is a compact (Hausdorff) topological space such that $\cdot$ is continuous in the first coordinate, namely for each $b\in S$ the map taking $x$ to $x\cdot b$ is continuous.
\newline
(ii) By a closed left ideal of such an Ellis semigroup we mean a nonempty closed subset $I$ of $S$ such that $a\cdot I\subseteq I$ for all $a\in S$.
\end{Definition}

Note that by the continuity assumptions any minimal left ideal of an Ellis semigroup $S$ is closed, and moreover  such things exist.

\begin{Fact} Let $(S,\cdot)$ be an Ellis semigroup. Let $J$ be the set of idempotents of $S$ (i.e. $a\in S$ such that $a\cdot a = a$). Then 
\newline
(i) for any closed left ideal $I$ of $S$, $I\cap J$ is nonempty. 
\newline
(ii) If $I$ is minimal and $u\in I\cap J$ then $(u\cdot I,\cdot)$ is a group. 
\newline
(iii) Moreover, as $I$, $u$ vary in (ii), the groups $u\cdot I$ are isomorphic. 
\end{Fact}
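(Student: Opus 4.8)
The plan is to prove the three clauses in order, using only the continuity of the right translations $\rho_b : x \mapsto x\cdot b$ (continuity in the first coordinate) together with compactness, followed by purely algebraic manipulation inside $S$. Throughout I will use repeatedly that, since each $\rho_b$ is continuous, for any $b,c$ the set $\{x : x\cdot b = c\}$ is closed and for any compact $K$ the image $K\cdot b = \rho_b(K)$ is compact, hence closed.

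For (i), I would first isolate idempotents. Given a closed left ideal $I$, I would apply Zorn's lemma to the family of nonempty closed subsemigroups of $I$ ordered by reverse inclusion: the intersection of a descending chain of such sets is nonempty by compactness and is again a closed subsemigroup, so a minimal one $M$ exists. I then show $M$ is a single idempotent. Pick $u \in M$; since $Mu$ is a closed subsemigroup of $M$ (closed because $\rho_u$ is continuous, a subsemigroup because $(xu)(yu) = x(uyu)u \in Mu$), minimality forces $Mu = M$, so $vu = u$ for some $v \in M$. The set $\{x \in M : xu = u\}$ is then a nonempty closed subsemigroup of $M$ — again closed by continuity of $\rho_u$ — so by minimality it equals $M$; in particular $u$ itself satisfies $uu = u$. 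Thus $u \in I \cap J$.

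For (ii), let $I$ be minimal and $u \in I \cap J$. First, $u$ is a left identity on $uI$: if $x = uy$ then $ux = uuy = uy = x$. The key consequence of minimality is that $Ix = I$ for every $x \in I$ (the set $Ix$ is a closed left ideal contained in $I$, hence equals $I$); applied to $x \in uI$, this yields $z$ with $zx = u$, so $uz \in uI$ is a left inverse of $x$ relative to $u$. It is then a standard semigroup lemma that a semigroup with a left identity $e$ in which every element has a left inverse relative to $e$ is a group: if $a'a = e$ and $a''a' = e$ then $aa' = (a''a')(aa') = a''(a'a)a' = a''a' = e$ and $ae = a(a'a) = (aa')a = a$, so the left identity and left inverses are two-sided. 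Hence $(uI,\cdot)$ is a group.

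For (iii), I would produce explicit isomorphisms by translation. The key point is that for idempotents $u,v$ in the \emph{same} minimal left ideal $I$ one automatically has $uv = u$ and $vu = v$: from $Iv = I$ pick $w$ with $wv = u$, whence $uv = wvv = wv = u$, and symmetrically $vu = v$. Since $u$ is a two-sided identity on $uI$, the relation $uv = u$ gives $xv = (xu)v = x(uv) = xu = x$ for $x \in uI$; using this, $\lambda_v : x \mapsto vx$ is a homomorphism $uI \to vI$, with inverse $y \mapsto uy$. For \emph{different} minimal left ideals $I_1,I_2$ with an idempotent $u \in I_1$, I would first manufacture a compatible idempotent in $I_2$: from $I_2 u = I_1$ choose $a \in I_2$ with $au = u$ and set $w := ua$; a direct check gives $w \in I_2$, $ww = w$, $uw = w$ and $wu = u$. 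These relations make $\rho_w : x \mapsto xw$ an isomorphism $uI_1 \to wI_2$ (inverse $z \mapsto zu$), and composing with the same-ideal case identifies $wI_2$ with $u_2 I_2$ for any chosen idempotent $u_2 \in I_2$, which gives (iii). The main obstacle is precisely this clause: the translation maps are bijections almost for free, but checking that they respect multiplication forces one to control products of distinct idempotents. Within one minimal ideal this is handled by the automatic identities $uv = u$, $vu = v$; the genuine work is the cross-ideal case, where one must construct the auxiliary idempotent $w = ua$ realizing the symmetric relations $uw = w$, $wu = u$ and verify its idempotency before the homomorphism computation goes through.
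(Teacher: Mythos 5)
Your proof is correct and complete; in particular the verifications that $Mu$ and $\{x\in M: x\cdot u=u\}$ are closed subsemigroups, the identities $uv=u$, $vu=v$ for idempotents in a common minimal left ideal, and the construction of the auxiliary idempotent $w=ua$ satisfying $uw=w$, $wu=u$ in the cross-ideal case all check out. The paper offers no proof of this Fact (it is quoted as standard from \cite{Ellis} and \cite{AAG}), and your argument is exactly the classical Ellis--Numakura proof given in those references, so there is nothing to compare beyond noting the agreement.
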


Following Newelski we may call the group $u\cdot I$ above the ``ideal group" of $S$.\\

We now consider a ``$G$-flow" $(X,G)$, namely a group $G$ and a (left) action of $G$ on a compact space $X$ by homeomomorphisms. For $g\in G$, let $\pi_{g}:X\to X$ be the corresponding homeomorphism of $X$. By a subflow of $(G,X)$ we mean some $(G,Y)$ where $Y$ is a nonempty closed subspace of $X$, closed under the action of $G$ (so $(G,Y)$ is itself a $G$-flow). 

\begin{Fact} Given a $G$-flow  $(X,G)$ let $E(X)$ be the closure of $\{\pi_{g}:g\in G\}$ in (the compact space) $X^{X}$. Then
\newline
(i) $(E(X),\cdot)$ is an Ellis semigroup, where $\cdot$ is composition, and is called the {\em enveloping Ellis semigroup} of $(X,G)$.
\newline
(ii) $(E(X), G)$ is also a $G$-flow, where the action  $G$ on $E(X)$ is $\pi_{g}\circ f$. 
\newline
(iii) The minimal closed left ideals of $(E(X),\cdot)$ coincide with the minimal subflows of $(E(X), G)$.
\end{Fact}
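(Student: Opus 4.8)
The plan is to work throughout in the compact space $X^{X}$ (compact by Tychonoff, since $X$ is compact) equipped with the product topology, in which $E(X)$ is by definition the closure of the dense set $\{\pi_{g}:g\in G\}$. The two structural facts I would isolate first are: (a) composition is continuous in the first coordinate, i.e. for fixed $b\in X^{X}$ the map $f\mapsto f\circ b$ is continuous, since its $x$-th coordinate $f(b(x))$ is just the $b(x)$-th coordinate projection of $f$; and (b) for a fixed \emph{continuous} map $h:X\to X$ (in particular $h=\pi_{g}$) the map $f\mapsto h\circ f$ is continuous, since its $x$-th coordinate $h(f(x))$ is $h$ applied to the $x$-th coordinate projection of $f$. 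Note that (b) can fail for a general $b\in X^{X}$, which is exactly why an Ellis semigroup is only required to be continuous in one coordinate.

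For part (i), the main point is that $E(X)$ is closed under composition. Using that $\pi_{g}\circ\pi_{h}=\pi_{gh}$ (as the action is a left action), I would argue in two steps, both via the principle $\phi(\overline{A})\subseteq\overline{\phi(A)}$ for continuous $\phi$. First, for fixed $g$, the map $f\mapsto\pi_{g}\circ f$ is continuous by (b) and carries $\{\pi_{h}:h\in G\}$ into itself, so $\pi_{g}\circ E(X)\subseteq E(X)$; hence $\{\pi_{g}:g\in G\}\circ E(X)\subseteq E(X)$. Second, for fixed $\eta\in E(X)$, the map $\xi\mapsto\xi\circ\eta$ is continuous by (a) and, by the previous step, carries $\{\pi_{g}:g\in G\}$ into $E(X)$; taking closures gives $E(X)\circ\eta\subseteq E(X)$. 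As $\eta$ was arbitrary, $E(X)\circ E(X)\subseteq E(X)$. Thus $E(X)$ is a closed (hence compact) subsemigroup of $X^{X}$ on which composition is continuous in the first coordinate, which is precisely an Ellis semigroup.

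For part (ii), I would first check that $g\cdot f:=\pi_{g}\circ f$ defines a left action of $G$ on $E(X)$: $\pi_{e}=\mathrm{id}$ gives $e\cdot f=f$, and $\pi_{gh}=\pi_{g}\circ\pi_{h}$ gives $(gh)\cdot f=g\cdot(h\cdot f)$. The set $E(X)$ is invariant under this action by the first step of part (i), and for each $g$ the map $f\mapsto\pi_{g}\circ f$ is a homeomorphism of $E(X)$ by (b) (with continuous inverse $f\mapsto\pi_{g^{-1}}\circ f$). Since $E(X)$ is compact, $(E(X),G)$ is a $G$-flow. For part (iii), the key is the equivalence, for a nonempty closed $Y\subseteq E(X)$, of ``$Y$ is a left ideal'' and ``$Y$ is $G$-invariant (a subflow)''. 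If $Y$ is a left ideal then $\pi_{g}\circ Y\subseteq E(X)\cdot Y\subseteq Y$ for each $g$, so $Y$ is a subflow. Conversely, if $Y$ is a closed subflow, then for each $y\in Y$ the continuous map $\xi\mapsto\xi\circ y$ sends $\{\pi_{g}:g\in G\}$ into $Y$ (by $G$-invariance), so taking closures $E(X)\circ y\subseteq\overline{Y}=Y$; hence $E(X)\cdot Y\subseteq Y$ and $Y$ is a left ideal. Since closed left ideals and subflows are literally the same family of subsets, their minimal members coincide.

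The main obstacle, and the step deserving most care, is the closedness of $E(X)$ under composition in part (i). Composition on $X^{X}$ is \emph{not} continuous in the second coordinate (nor jointly continuous), because a general element of $E(X)$ is not a continuous map, so one cannot simply pass to closures in a single step. The two-step argument above is designed precisely to invoke continuity only where it genuinely holds---namely (a) right multiplication by any fixed element, and (b) left multiplication by the \emph{continuous} maps $\pi_{g}$---and to exploit the density of $\{\pi_{g}:g\in G\}$ to propagate containment through closures. The same interplay between (a), (b) and density is what drives parts (ii) and (iii).
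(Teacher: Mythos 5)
The paper states this as a Fact without proof, citing the topological dynamics literature (Ellis; Akin--Auslander--Glasner), so there is no in-paper argument to compare against. Your proof is correct and is exactly the standard one: isolating the two one-sided continuity properties of composition on $X^{X}$, using density of $\{\pi_{g}:g\in G\}$ together with $\phi(\overline{A})\subseteq\overline{\phi(A)}$ to get closure under composition in two steps, and observing that closed left ideals and closed $G$-invariant subsets are literally the same family; your cautionary remark about the failure of continuity in the second coordinate correctly identifies the only delicate point.
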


Hence from a $G$-flow $(X,G)$, by Facts 1.4 and 1.5 we obtain a unique (up to isomorphism) group  (i.e. $(u\cdot I,\cdot)$ where $I$ is a minimal left ideal of $E(X)$ and $u$ is an idempotent in $I$). \\

We now begin connecting the two points of view. Let $T$ be a complete first order theory, $G$ a $\emptyset$-definable group, and $M$ a model. Then
$(S_{G}(M),G(M))$ is a $G(M)$-flow, $G(M)$ acting on the left. It will be convenient, now and throughout the rest of the paper, to denote by $\cdot$ the group operation on $G$ as well as the action of $G(M)$ on $S_{G}(M)$. We will also use $\cdot$ to denote the semigroup operation on $E(S_{G}(M))$ but as we point out there should be no ambiguity.
As Newelski \cite{Newelski2} observes:

\begin{Fact} (i) There is a natural homeomorphism between the compact spaces $E(S_{G}(M))$ and $S_{G,M}({\bar M})$  (global types concentrating on $G$ which are finitely satisfiable in $M$), and hence by Fact 1.1 also  $S_{G,ext}(M)$. 
\newline 
(ii) Under this homeomorphism, the Ellis semigroup operation on $E(S_{G}(M))$, becomes the following operation $\cdot$  on $S_{G,M}({\bar M})$: 
Given $p,q \in S_{G,M}({\bar M})$, let $b$ realize $q$ and let $a$ realize $p|({\bar M},b)$. Then $p\cdot q = tp(a\cdot b/{\bar M})$. 
\end{Fact}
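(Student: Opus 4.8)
The plan is to prove Fact 1.6 by constructing the homeomorphism explicitly and then checking that it intertwines the two semigroup operations.

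First I would set up the correspondence at the level of points. An element of $E(S_{G}(M))$ is, by definition, a function $f:S_{G}(M)\to S_{G}(M)$ lying in the closure of $\{\pi_{g}:g\in G(M)\}$ inside $S_{G}(M)^{S_{G}(M)}$. The natural thing to attach to such an $f$ is the image $f(q_{0})$ of a distinguished point, but to get a global type finitely satisfiable in $M$ I would instead read off, for each definable $X\subseteq G$ over ${\bar M}$, whether the ``approximating'' translates $\pi_{g}$ eventually land $X$-data consistently; concretely, I would send $f$ to the global type whose restriction to formulas over $M$ is recovered from the net $g_{i}\to f$ and whose full global content is pinned down by finite satisfiability in $M$. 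The cleaner route is to use the fact that each $f\in E(S_{G}(M))$ is a limit of a net $(\pi_{g_{i}})$, identify $g_{i}$ with its type, and let the limit of $\operatorname{tp}(g_{i}/{\bar M})$ (taken in the compact space $S_{G,M}({\bar M})$, using that types finitely satisfiable in $M$ form a closed set) be the associated global type. I would then verify this map is well-defined (independent of the net), injective, surjective, and continuous both ways, so that it is a homeomorphism; continuity and the compactness of both spaces make the bijection automatically a homeomorphism, which is the main simplification. Composing with Fact 1.1 then gives the identification with $S_{G,ext}(M)$, proving (i).

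For part (ii), I would unwind what composition $f_{1}\circ f_{2}$ of two elements of $E(S_{G}(M))$ does and show it matches the described operation $p\cdot q=\operatorname{tp}(a\cdot b/{\bar M})$, where $b\models q$ and $a\models p|({\bar M},b)$. The key computation is to evaluate $(f_{1}\circ f_{2})$ on a point of $S_{G}(M)$ by approximating $f_{2}$ by translations $\pi_{h_{j}}$ and $f_{1}$ by translations $\pi_{g_{i}}$, so that the composite is approximated by $\pi_{g_{i}}\circ\pi_{h_{j}}=\pi_{g_{i}\cdot h_{j}}$. Passing to the limit, the type attached to the composite should be $\lim_{i}\lim_{j}\operatorname{tp}(g_{i}\cdot h_{j}/{\bar M})$, and I would argue this equals exactly $\operatorname{tp}(a\cdot b/{\bar M})$ for the realizations described: taking $h_{j}\to b$ realizing $q$ first and then $g_{i}\to a$ realizing the nonforking (finitely satisfiable in $M$) extension $p|({\bar M},b)$ is precisely the order of limits that the semigroup multiplication encodes. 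Here I would lean on the invariance/finite-satisfiability machinery recalled in the preliminaries to justify that $p|({\bar M},b)$ is well-defined and that the iterated limit is insensitive to the choice of net.

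The main obstacle I expect is bookkeeping the order of the two limits and the asymmetry it produces. The semigroup operation is continuous only in the first coordinate, and correspondingly the formula for $p\cdot q$ treats $p$ and $q$ asymmetrically (one realizes $q$, then takes a canonical extension of $p$ over that realization). Getting the correspondence to respect this asymmetry — i.e. checking that $f_{p}\circ f_{q}$ corresponds to $p\cdot q$ and not to $q\cdot p$ — requires being careful about which variable the limit is continuous in and about the direction of the action ($\pi_{g}$ is left translation). I would resolve this by fixing conventions early (left action, $\cdot$ as composition in the stated order) and verifying the match on basic clopen sets $[\phi(x,b)]$, reducing the whole statement to the single assertion that $\phi(x,b)\in p\cdot q$ iff $\phi(a\cdot b',\ast)$ holds for the appropriate realizations, which is a direct translation of the definitions once the limits are correctly ordered.
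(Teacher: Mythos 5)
Your construction runs in the opposite direction to the paper's: the paper starts from $p\in S_{G,M}({\bar M})$ and builds the map $\pi_{p}\in S_{G}(M)^{S_{G}(M)}$ by $\pi_{p}(q)=tp(a\cdot b/M)$ (well-defined by $M$-invariance, and lying in the closure of $\{\pi_{g}:g\in G(M)\}$ precisely because $p$ is finitely satisfiable in $M$), whereas you start from $f\in E(S_{G}(M))$ and extract a type as a limit of $tp(g_{i}/{\bar M})$ along a net with $\pi_{g_{i}}\to f$. These are mirror images, and your version is workable, but note that the two hard points land in different places. In your direction, ``well-definedness, independent of the net'' \emph{is} the injectivity of $p\mapsto\pi_{p}$ (if $\pi_{g_{i}}\to f$ and $\pi_{h_{j}}\to f$ with $tp(g_{i}/{\bar M})\to p$ and $tp(h_{j}/{\bar M})\to p'$, you must show $p=p'$); you assert this will be verified but give no mechanism, and since $\pi_{p}$ only records the data $\{b\in G({\bar M}):\phi(x\cdot b)\in p\}$ for $\phi$ over $M$, while $p$ contains arbitrary formulas over ${\bar M}$, this is not a formality. (The paper defers exactly this point to the reader too, so you are not worse off, but it is the step your write-up should not treat as bookkeeping.)

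The concrete gap is in your treatment of (ii). You compute $(f_{1}\circ f_{2})$ as the iterated limit $\lim_{i}\lim_{j}\pi_{g_{i}\cdot h_{j}}$ and claim the associated type is $\lim_{i}\lim_{j}tp(g_{i}h_{j}/{\bar M})=tp(a\cdot b/{\bar M})$ with $a\models p|({\bar M},b)$. The inner limit is fine: $\lim_{j}tp(g_{i}h_{j}/{\bar M})=tp(g_{i}\cdot b/{\bar M})$ for $b\models q$, since $g_{i}\in {\bar M}$. But the outer limit $\lim_{i}tp(g_{i}\cdot b/{\bar M})$ asks, for $\psi$ over ${\bar M}$, whether eventually $\psi(g_{i}\cdot b)$ holds, i.e. whether $g_{i}$ eventually lies in the set $\{g:\psi(g\cdot b)\}$ --- a set defined over ${\bar M}b$, not over ${\bar M}$. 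This is \emph{not} determined by $p=\lim_{i}tp(g_{i}/{\bar M})$: two nets converging to $p$ over ${\bar M}$ can converge to different coheirs of $p$ over ${\bar M}b$, and these can disagree on $\psi(x\cdot b)$. So ``the iterated limit is insensitive to the choice of net'' is precisely the content, not a consequence of invariance. The clean repair is to avoid iterated limits altogether and verify directly that $\pi_{p}\circ\pi_{q}=\pi_{p\cdot q}$: for $q'\in S_{G}(M)$ realized by $b'\in G({\bar M})$, both sides equal $tp(a\cdot b\cdot b'/M)$, using that $b'\in{\bar M}$ so $a\models p|({\bar M},b)$ in particular realizes $p|(M,b,b')$, and that $\pi_{p}$ applied to any realization in ${\bar M}$ of $tp(b\cdot b'/M)$ gives the same answer by $M$-invariance of $p$. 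Combined with the injectivity from (i), this identifies the composite with $p\cdot q$ without ever having to control the outer limit.
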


\noindent
{\em Commentary.}   Concerning (i): Let $p\in S_{G,M}({\bar M})$ and let $a$ realise $p$. Then we have a well-defined map $\pi_{p}:S_{G}(M) \to S_{G}(M)$, given by: 
let $q\in S_{G}(M)$ be realized by $b\in G({\bar M})$. Then $\pi_{p}(q) = tp(a\cdot b/{\bar M})$. The map $\pi_{p}$ is well-defined precisely because $p$ is $M$-invariant. Suppose $q_{1},..,q_{n}\in S_{G}(M)$, realized by $b_{1},..,b_{n}\in G({\bar M})$, and $\phi_{1}(x),.., \phi_{n}(x)$ are formulas over $M$ such that $\phi_{i}(a\cdot b_{i})$ for $i=1,..,n$. Then as $p$ is finitely satisfiable in $M$ there is $a'\in G(M)$, such that $\phi_{i}(a'\cdot b_{i})$ for $i=1,..,n$. This shows that $\pi_{p}$ is in the closure of $\{\pi_{g}: g\in G(M)\}$. On the other hand, by compactness, any $f:X \to X$ in the closure of $\{\pi_{g}:g\in G(M)\}$ has the form $\pi_{p}$ for some $p\in S_{G,M}({\bar M})$. It remains to see that $p\in S_{G,M}({\bar M})$ is determined uniquely by $\pi_{p}$ and this is left to the reader, as well as (ii).\\

In the following we will identify freely $E(S_{G}(M))$, $S_{G,M}({\bar M})$ and $S_{G,ext}(M)$, denoting them by $S$, and denote by $\cdot$ the Ellis semigroup structure. As remarked earlier there is a natural embedding of $G(M)$ in $S$ and the group operation on $G(M)$ is precisely the restriction of the semigroup structure on $S$. So there is no ambiguity in denoting this semigroup operation by $\cdot$.
The following is not needed for the rest of the paper but we state it just for the record:

\begin{Remark} Let $S^{*} = S\setminus G(M)$. Then $(S,G(M),S^{*})$ is a {\em classical Ellis semigroup}  in the sense of Definition 5.2 (and Chapter 6) of 
\cite{AAG}. Namely $S$ is an Ellis monoid (with identity $e$ the identity of $G(M)$), $G(M)$ is an open dense submonoid, in fact subgroup, such that the restriction of the semigroup operation $\cdot$ to $G(M)\times S$ is continuous, $S^{*}$ is a closed subset of $S$ such that $G(M)\cup S^{*} = S$, and moreover 
$S\cdot S^{*}\cdot S = S^{*}$.
\end{Remark}

We finish this introductory section by summarizing how the Ellis theory applies to stable groups.

\begin{Fact} Suppose $T$ is stable (and as above $G$ a $\emptyset$-definable group, and $M$ any model). Then
\newline
(i) $S_{G}(M)) = E(S_{G}(M))$.
\newline
(ii) The semigroup operation on $S_{G}(M)$ is: given $p,q\in S_{G}(M)$, let $a, b$ realize $p,q$ respectively such that $a$ and $b$ are independent over $M$. Then $p\cdot q = tp(a\cdot b/M)$. 
\newline
(iii) $S_{G}(M)$ has a unique minimal closed left ideal (also the unique minimal closed right ideal) $I$ and $I$ is already a subgroup of $S_{G}(M)$. 
\newline
(iv) $I$ is precisely the collection of generic types over $M$.
\newline
(iv) $I$ (with its induced topology) is a compact topological group, isomorphic to $G/G^{0}$.

\end{Fact}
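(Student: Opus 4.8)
The plan is to read off each clause from the standard forking-and-genericity theory of stable groups (Fact 1.2), using the semigroup description recorded in Fact 1.6. For (i) the key input is the characteristic feature of stability that externally definable sets are definable: for any model $M$ and any $b$ from ${\bar M}$, the trace on $G(M)$ of a ${\bar M}$-definable set is already definable over $M$. Hence the Boolean algebra of externally definable subsets of $G(M)$ coincides with that of $M$-definable subsets, so $S_{G,ext}(M) = S_{G}(M)$; composing with the homeomorphism $E(S_{G}(M)) \cong S_{G,ext}(M)$ of Fact 1.6(i) gives (i). Equivalently, in a stable theory every $p \in S_{G}(M)$ has a unique nonforking global extension, which is finitely satisfiable in $M$, so the restriction map $S_{G,M}({\bar M}) \to S_{G}(M)$ is a bijection.

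For (ii) I would transport the operation of Fact 1.6(ii) across this identification. Given $p,q \in S_{G}(M)$, identify them with their nonforking global extensions, realize $b \models q$ globally and $a \models p|({\bar M},b)$. Since $p$ does not fork over $M$, transitivity of nonforking gives that $tp(a/{\bar M}b)$ does not fork over $M$, so $a$ and $b$ are independent over $M$ and realize $p|M$, $q|M$ respectively. Restricting $tp(a\cdot b/{\bar M})$ to $M$ yields $tp(a\cdot b/M)$, and stationarity of types over the model $M$ guarantees that this depends only on $p$ and $q$ and not on the choice of independent realizations. This is precisely the asserted formula.

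For (iii)--(v) I would first check that the set $S_{G,gen}(M)$ of generic types is a two-sided ideal of $(S_{G}(M),\cdot)$. For the left-ideal property, if $q$ is generic and $p$ is arbitrary, choose $a \models p$ and $b \models q$ independent over $M$; then $b$ is generic over $Ma$, and as left translation by $a$ is a definable bijection, $a\cdot b$ is generic over $Ma$, hence over $M$, so $p\cdot q$ is generic. The right-ideal property is symmetric, using right translation. Next let $\rho: S_{G}(M) \to G/G^{0}$ be the canonical continuous map through which $G \to G/G^{0}$ factors. By (ii), and since $G^{0}$ is normal, $\rho(p\cdot q) = (a\cdot b)G^{0} = \rho(p)\rho(q)$, so $\rho$ is a semigroup homomorphism into the group $G/G^{0}$, whose restriction to $S_{G,gen}(M)$ is, by Fact 1.2, a homeomorphism onto $G/G^{0}$. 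A bijective semigroup homomorphism onto a group forces its domain to be a group (with identity $\rho^{-1}(1)$ and inverses $\rho^{-1}(\rho(p)^{-1})$, both obtained by injectivity), and since $\rho$ is simultaneously a homeomorphism, $S_{G,gen}(M)$ is a compact topological group isomorphic to $G/G^{0}$; this is (v), and in particular $S_{G,gen}(M)$ is a subgroup. Finally, any closed left ideal $J$ meets $S_{G,gen}(M)$, since for $q \in J$ and any generic $p$ the product $p\cdot q$ lies in $J$ (left ideal) and is generic (right-ideal property of the generics); the group structure then writes each generic $r$ as $(r\cdot s^{-1})\cdot s$ with $s \in J \cap S_{G,gen}(M)$, so $r \in J$. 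Thus $S_{G,gen}(M)$ is contained in every closed left ideal, making it the unique minimal one, and symmetrically the unique minimal closed right ideal; this gives (iii) and (iv) with $I = S_{G,gen}(M)$. Consistency with Fact 1.4 is automatic: as $I$ is already a group its identity is the relevant idempotent $u$, and $u\cdot I = I$.

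Most of the serious model-theoretic content is packaged into Fact 1.2 together with stationarity and unique nonforking extensions; granting these, the remainder is the bookkeeping above. The step requiring the most care is verifying that the abstract Ellis ideal group $u\cdot I$ genuinely coincides with $S_{G,gen}(M)$ rather than merely being a quotient of it: this rests on showing that $S_{G,gen}(M)$ is a minimal, indeed the unique minimal, closed left ideal, for which the two-sided ideal property and the internal invertibility must be combined. Tracking which genericity computations take place over $M$ versus over ${\bar M}$, and checking that the topology transported from $G/G^{0}$ agrees with the subspace topology on $S_{G,gen}(M)$ so that the isomorphism in (v) is genuinely topological, is where I would be most cautious.
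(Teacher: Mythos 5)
The paper gives no proof of this Fact --- it is stated as a summary of classical stable group theory (with Chapter 1 of \cite{Pillay-book} as the implicit reference) --- so there is no argument of the paper's to compare against. Your reconstruction is correct and follows exactly the standard route the paper is relying on: (i) via ``externally definable $=$ definable'' (equivalently, unique nonforking $=$ coheir extensions identifying $S_{G,M}({\bar M})$ with $S_{G}(M)$), (ii) via stationarity of types over models, and (iii)--(v) via the two-sided ideal property of the generic types combined with the coset homomorphism onto $G/G^{0}$ whose bijectivity on generics is precisely Fact 1.2.
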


\section{Definably amenable groups}
Here we give a rather soft result for definably amenable groups $G$ in $NIP$ theories. The result is that the class of global right $f$-generic types of $G$ is, under the natural operation $\cdot$, an Ellis semigroup $S$ whose corresponding ``ideal group"  (from 1.4) is precisely $G/G^{00}$ (even as a topological group).  In fact in this case $S$ has no proper closed left ideals. \\

We first recall the relevant facts from \cite{NIPII} about definably amenable groups in $NIP$ theories. 
We assume $T$ has $NIP$.  Let us fix a countable submodel $M_{0}$ of ${\bar M}$. A definable subset $X$ of $G$ (or the formula defining $X$) is said to be left $f$-generic if for all $g\in G$, $g\cdot X$ does not fork over $M_{0}$. By \cite{Chernikov-Kaplan} we can replace ``does not fork" by ``does not divide". A global type $p\in S_{G}({\bar M})$ is said to be {\em left $f$-generic} if every formula in $p$ is left $f$-generic (equivalently, by $NIP$ for all $g\in G$, $gp$ is $Aut({\bar M}/M_{0})$-invariant). Likewise for right $f$-generic. Note that $p\in S_{G}({\bar M})$ is left $f$-generic if and only if $p^{-1}$ is right $f$-generic. 
The existence of a left (right) $f$-generic type is by 5.10 and 5.11 of \cite{NIPII} equivalent to the definable amenability of $G$. 
\\

\begin{Fact} Suppose $p(x)\in S_{G}({\bar M}$ is right $f$-generic. Then 
\newline
(i) so is $p|{\bar M}'$ for any saturated ${\bar M'}$ containing ${\bar M}$, as well as $p\cdot g$ for any $g\in G$.
\newline
(ii) $G^{00}$ is the right-stabilizer of $p$, i.e. $\{g\in G:p\cdot g = p\}$.
\end{Fact}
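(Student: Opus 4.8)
The plan is to exploit throughout the characterisation recorded just above the statement: a global type is right $f$-generic exactly when all of its right translates are $M_0$-invariant (equivalently, do not fork over $M_0$). For part (i), the translate $p\cdot g$ ($g\in G$) is handled by associativity of the right action: $(p\cdot g)\cdot h=p\cdot(gh)$, and since $gh\in G$ and $p$ is right $f$-generic, this is $M_0$-invariant for every $h$; hence $p\cdot g$ is right $f$-generic. For $p|{\bar M}'$ I would argue formula by formula. Note first that $p=p\cdot e$ is $M_0$-invariant, so the canonical extension $p|{\bar M}'$ is defined. Any $\psi(x,b')\in p|{\bar M}'$ with $b'\in{\bar M}'$ is, by definition of the canonical extension, the image under some $\sigma\in\mathrm{Aut}({\bar M}'/M_0)$ of a formula $\psi(x,b)\in p$ with $b\in{\bar M}$. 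Being right $f$-generic is a statement about non-forking over the \emph{fixed} small set $M_0$ of all right translates; it is preserved by $\sigma$ (which fixes $M_0$), and it is absolute between ${\bar M}$ and ${\bar M}'$, because any translating $h'\in G({\bar M}')$ is $M_0b$-conjugate, by saturation of ${\bar M}$, to some $h\in G({\bar M})$. Thus $\psi(x,b')$ inherits right $f$-genericity from $\psi(x,b)$, and $p|{\bar M}'$ is right $f$-generic.

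For part (ii) I would first set up a $G$-equivariant map to the compact group $G/G^{00}$. Since the quotient $G\to G/G^{00}$ factors through $S_{G}(M)$ for a small model $M\supseteq M_0$, composing with restriction yields a continuous map $\pi\colon S_{G}({\bar M})\to G/G^{00}$ with $\pi(p)=aG^{00}$ for any $a\models p$; this is well defined since any two realisations of $p$ have the same type over $M$. As $\pi$ is induced by the quotient homomorphism and $G^{00}$ is normal, $\pi(p\cdot g)=\pi(p)\cdot(gG^{00})$ for $g\in G$. Consequently $p\cdot g=p$ forces $gG^{00}=G^{00}$, i.e. $g\in G^{00}$, giving the inclusion $\mathrm{Stab}(p)\subseteq G^{00}$.

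The reverse inclusion $G^{00}\subseteq\mathrm{Stab}(p)$ is the heart of the matter. My plan is to show that $H:=\mathrm{Stab}(p)$ is a type-definable subgroup of bounded index and then to invoke the minimality of $G^{00}$ among type-definable bounded-index subgroups: combined with $H\subseteq G^{00}$ this forces $H=G^{00}$. Bounded index is the easy half. The orbit $\{p\cdot g:g\in G\}$ consists entirely of $M_0$-invariant global types, and there are only boundedly many of these (an $M_0$-invariant type is determined by its defining scheme over the small set $M_0$). Since this orbit is in bijection with $G/H$, the index $[G:H]$ is bounded.

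The step I expect to be the main obstacle is the type-definability of $H$. The naive description $H=\{g:\phi(x\cdot g,b)\in p\leftrightarrow\phi(x,b)\in p\text{ for all }\phi\text{ and }b\in{\bar M}\}$, together with Borel definability of $M_0$-invariant types in $NIP$ theories, shows readily that $H$ is $\mathrm{Aut}({\bar M}/M_0)$-invariant; but invariance alone only yields $G^{000}\subseteq H$, and $G^{000}$ may be strictly smaller than $G^{00}$. Upgrading this to genuine type-definability is precisely where right $f$-genericity of $p$ (rather than mere $M_0$-invariance) has to be used, and it is the technical input I would import from \cite{NIPII}. A convenient reformulation, via $p\mapsto p^{-1}$, turns the right-stabiliser statement into a left-stabiliser statement for the left $f$-generic type $p^{-1}$ and allows a left-invariant Keisler measure (which exists by definable amenability) to be brought in; but the crux, pinning the stabiliser down to $G^{00}$ and not merely to some invariant subgroup between $G^{000}$ and $G^{00}$, remains the same.
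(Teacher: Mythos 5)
First, for calibration: the paper gives no proof of this statement at all --- it is stated as a Fact and imported from \cite{NIPII} (the material around Proposition 5.6 and 5.10--5.11 there), so there is no in-paper argument to compare yours against. Within that caveat, your part (i) is correct and complete: the translate case is just $(p\cdot g)\cdot h=p\cdot(g\cdot h)$, and the extension to ${\bar M}'$ reduces, as you say, to the fact that right $f$-genericity of a formula depends only on the $M_0$-type of its parameters and that non-forking over $M_0$ is absolute between elementary extensions. The inclusion $\mathrm{Stab}(p)\subseteq G^{00}$ in (ii) via the equivariant map $S_{G}({\bar M})\to G/G^{00}$ is also correct, as is the boundedness of $[G:\mathrm{Stab}(p)]$ via the injection of $G/\mathrm{Stab}(p)$ into the (bounded) set of $M_0$-invariant global types.

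The genuine gap is exactly where you flag it: the inclusion $G^{00}\subseteq\mathrm{Stab}(p)$. Your reduction scheme (type-definable $+$ bounded index $+$ minimality of $G^{00}$, combined with the already-proved reverse inclusion) is the right skeleton, and your diagnosis is accurate --- $\mathrm{Aut}({\bar M}/M_0)$-invariance of the stabilizer only yields $G^{000}\subseteq\mathrm{Stab}(p)$, which is a priori strictly weaker. But the type-definability of $\mathrm{Stab}(p)$ (equivalently, closing the gap between $G^{000}$ and $G^{00}$) is the entire non-routine content of part (ii), and you do not prove it: you defer it to \cite{NIPII}, which is precisely the source the paper cites for the whole Fact. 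This is where definable amenability and $f$-genericity must actually be used (in \cite{NIPII} it is intertwined with the proof that $G^{000}=G^{00}$ for definably amenable $NIP$ groups); it cannot be recovered from invariance and boundedness alone. So as a self-contained argument your proposal establishes part (i) and the easy half of (ii), and honestly but genuinely leaves the hard half as an external citation.
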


We now assume $G$ to be definably amenable (equivalently as mentioned above  right $f$-generic types of $G$ exist). 

\begin{Lemma} Let $S$ be the set of global right $f$-generic types of $G$. For  $p,q\in S$ define $p\cdot q$ to be 
$tp(a\cdot b/{\bar M})\in S_{G}({\bar M})$, where $b$ realizes $q$ and $a$ realizes $p|{\bar M}'$ where ${\bar M}'$ is a saturated model containing ${\bar M},b$.  Then with the induced topology from $S_{G}({\bar M})$, $(S,\cdot)$ is an Ellis semigroup. 
\end{Lemma}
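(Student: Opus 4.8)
The plan is to check the four requirements in turn: that $S$ is a nonempty compact Hausdorff space, that $\cdot$ is well defined as a map $S\times S\to S$, that it is associative, and that $p\mapsto p\cdot q$ is continuous for each fixed $q$. Hausdorffness is inherited from the Stone space $S_{G}({\bar M})$, and nonemptiness is exactly the definable amenability of $G$ (5.10--5.11 of \cite{NIPII}). For compactness I would observe that $S$ is \emph{closed} in $S_{G}({\bar M})$: a global type is right $f$-generic precisely when it contains no non-(right-$f$-generic) formula, so $S=\bigcap\{[\neg\phi]:\phi \text{ not right } f\text{-generic}\}$ is an intersection of clopen sets.

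The computational engine is a reduction of $\cdot$ to a group translation. Every $p\in S$ is in particular $Aut({\bar M}/M_{0})$-invariant (take $g=e$ in the definition), so its canonical extension is controlled by $M_{0}$-types, and unwinding the definition of $p\cdot q$ gives, for any $b'\in G({\bar M})$ with $tp(b'/M_{0})=q|M_{0}$ and any formula $\phi(x)$ over ${\bar M}$, that $\phi(x)\in p\cdot q$ iff $\phi(x\cdot b')\in p$. In other words $p\cdot q=p\cdot b'$, the right translate of $p$ by $b'$; well-definedness (independence of the realization $b$, of $\bar M'$, and of the choice of $b'$) is exactly the $M_{0}$-invariance of $p$. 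Two things follow at once: since $p\cdot q$ is a right translate of the right $f$-generic type $p$ by an element of $G$, it is again right $f$-generic by Fact 2.1(i), so $p\cdot q\in S$; and fixing one such $b'$, the map $p\mapsto p\cdot b'$ is the restriction to $S$ of right translation by $b'$ on $S_{G}({\bar M})$, a homeomorphism (the preimage of $[\psi]$ is $[\psi(x\cdot b')]$), giving continuity in the first coordinate.

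The remaining point, associativity, is where I expect the only real work. The key extra input is that right translation of a right $f$-generic type factors through $G/G^{00}$: by Fact 2.1(ii) the right stabilizer of $p$ is $G^{00}$, so $p\cdot g$ depends only on $gG^{00}$. Since (as recalled in the preliminaries) the quotient map $G\to G/G^{00}$ factors through $S_{G}(M_{0})$, all realizations of $q|M_{0}$ share a single coset $c(q)\in G/G^{00}$, and the reduction above yields $p\cdot q=p\cdot g$ for every $g$ with $gG^{00}=c(q)$. A direct check from the definition $q\cdot s=tp(b\cdot c/{\bar M})$ (with $c\models s$ and $b\models q|({\bar M},c)$, whence $bG^{00}=c(q)$ and $cG^{00}=c(s)$) shows that $c$ is multiplicative, i.e. $c(q\cdot s)=c(q)\,c(s)$. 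Applying the reduction twice to the left-hand side and once to the right, both $(p\cdot q)\cdot s$ and $p\cdot(q\cdot s)$ become $p\cdot g$ for an arbitrary $g$ in the coset $c(q)\,c(s)$, and therefore agree.

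The hard part is thus the bookkeeping in this last step -- verifying that $c$ is multiplicative and that the two bracketings really land in the same coset -- rather than any deep fact. Should the coset computation prove fiddly, an alternative is to deduce associativity of $\cdot$ from the (standard) associativity of the tensor product $p\otimes q$ of $M_{0}$-invariant types together with associativity of the group operation, since $p\cdot q$ is the image of $p\otimes q$ under the definable multiplication map $(x,y)\mapsto x\cdot y$.
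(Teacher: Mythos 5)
The central step of your argument --- the identity $p\cdot q=p\cdot b'$ for a \emph{single} $b'\in G({\bar M})$ realizing $q|M_{0}$ --- does not follow from the $M_{0}$-invariance of $p$, which is all you invoke for it. Unwinding the canonical extension for a formula $\phi(x)=\psi(x,c)$ with parameters $c\in{\bar M}$ gives: $\psi(x\cdot b,c)\in p|{\bar M}'$ iff $\psi(x\cdot b'',c'')\in p$ for some (any) $(b'',c'')$ in ${\bar M}$ with $tp(b''c''/M_{0})=tp(bc/M_{0})$. So invariance only yields $\phi(x)\in p\cdot q$ iff $\phi(x\cdot b')\in p$ for $b'$ realizing $q$ restricted to a small model containing $M_{0}$ \emph{and the parameters of} $\phi$; the admissible $b'$ depends on $\phi$, and no single $b'\in G({\bar M})$ realizes all of $q$ unless $q$ is realized. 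Indeed the identity as you derive it is false for general invariant $p$: take $p=tp(e/{\bar M})$, so that $p\cdot q=q$, while $p\cdot b'=tp(b'/{\bar M})$ is a realized type. The local, formula-by-formula version is exactly what the paper uses for continuity in the first coordinate, and it suffices there; but your uses of the global identity to get closure of $S$ under $\cdot$ and associativity need repair.

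The identity is nevertheless true for right $f$-generic $p$, and you already hold the needed ingredient, just deployed too late: by Fact 2.1(i)--(ii) applied over ${\bar M}'$, the right stabilizer of $p|{\bar M}'$ is $G^{00}$, so $(p|{\bar M}')\cdot b$ depends only on the coset $bG^{00}$, which is determined by $tp(b/M_{0})=q|M_{0}$ because $G^{00}$ is type-definable over $M_{0}$ and the quotient map factors through $S_{G}(M_{0})$; restricting to ${\bar M}$ then gives $p\cdot q=p\cdot b'$. With that fix, your coset bookkeeping for associativity does go through and is a genuinely different (and rather slick) route from the paper's, which instead obtains closure under $\cdot$ by applying Fact 2.1(i) directly to $tp(a\cdot b/{\bar M}')$ in the larger model and reduces associativity to the standard associativity of canonical extensions of invariant types --- essentially the ``tensor product'' argument you mention as a fallback. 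Either carry out the stabilizer argument up front, or fall back on that route; as written, the ``unwinding'' paragraph is the gap.
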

\begin{proof} Note first that $S$ is a closed subset of $S_{G}({\bar M})$ so is compact. 
\newline
Secondly we show that $\cdot: S\times S \to S_{G}({\bar M})$ is continuous in the first coordinate. Let $\phi(x)$ be a formula over ${\bar M}$, say over a countable model $M$ containing $M_{0}$. Let $q\in S$ (or even in $S_{G}({\bar M})$) and let $b \in G = G({\bar M})$ realize $q|M$. Then for $p\in S$, $\phi(x)\in p\cdot q$ if and only if $\phi(b\cdot x)\in p$.
\newline
Thirdly we show that $S$ is closed under $\cdot$. Let $p, q \in S$, let $b$ realize $q$ and $a$ realize $p|{\bar M}'$ as in the statement of the lemma. By Fact 2.1(i), $tp(a\cdot b/{\bar M}')$ is right $f$-generic, hence so is $p\cdot q = tp(a\cdot b/{\bar M})$.
\newline
Finally we need to know that $\cdot$ is associative. This amounts to showing that if $p,q, r\in S$, and $a,b,c$ realize $p,q,r$ respectively such that $b$ realizes $q|{\bar M}c$ and $a$ realizes $p|{\bar M},b,c$ then $a\cdot b$ realizes $(p\cdot q)|{\bar M}c$, and this is straightforward.
\end{proof}

\begin{Lemma} $S$ has no proper left ideals (closed or otherwise). 
\end{Lemma}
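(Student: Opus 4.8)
Let me think about what we need to show. $S$ is the set of global right $f$-generic types of $G$, with the Ellis semigroup operation defined in Lemma 2.2. A left ideal $J$ would be a nonempty subset with $S \cdot J \subseteq J$. To show no proper left ideal exists, I want to show: for any $p, q \in S$, we have $q \in S \cdot p$, i.e. $S \cdot p = S$ for every single element $p$. Actually even stronger — I should try to show that $S \cdot p = S$ for all $p \in S$, which immediately gives that the whole $S$ is the only nonempty left ideal (since any left ideal $J$ containing some $p$ then contains $S \cdot p = S$).

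So the key claim becomes: given $p, q \in S$, can I find $r \in S$ with $r \cdot p = q$? The natural candidate is $r = q \cdot p^{-1}$. This uses two facts I'd want to invoke: first, that the inverse $p^{-1}$ of a right $f$-generic type is (via the remark in the preliminaries) left $f$-generic, and relatedly I need the product $q \cdot p^{-1}$ to land back in $S$, i.e. to be right $f$-generic. Let me think carefully. We have $p^{-1}$ left $f$-generic; I'd want to show $q \cdot p^{-1}$ is right $f$-generic. This should follow from the semigroup/invariance machinery: realizing the product by independent-enough realizations and using that the right-translate-forking condition is preserved. Then $(q \cdot p^{-1}) \cdot p$ should equal $q$, using associativity from Lemma 2.2 together with the fact that $p^{-1} \cdot p = e$-type behaves correctly when we multiply realizations.

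The main obstacle, I expect, is verifying that $q \cdot p^{-1} \cdot p = q$ rigorously, since $p^{-1} \cdot p$ need not be a realized type and the cancellation $a \cdot a^{-1} = e$ happens at the level of realizations in a larger saturated model, not directly on types. The careful bookkeeping is: take a long enough chain of saturated models, realize $p$ by $a$, realize $q$ and $p^{-1}$ by appropriately independent (finitely-satisfiable-over / invariant-over the right base) realizations, and track that when we compute $(q \cdot p^{-1}) \cdot p$ the two occurrences of $p$ (one inside $p^{-1}$'s origin, one on the right) combine to an identity. This requires the right stacking of invariance bases so that associativity and the definition of $\cdot$ via $p|{\bar M}'$ line up. I'd also double-check that $q \cdot p^{-1}$ genuinely lies in $S$ by appealing to closure of $S$ under $\cdot$ (Lemma 2.2) after establishing $p^{-1} \in S$-analogue via the left/right $f$-generic duality; to keep everything inside $S$ one may instead argue directly that $r$ constructed this way is right $f$-generic using Fact 2.1.

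An alternative, possibly cleaner route avoids constructing an explicit inverse: show directly that for fixed $p \in S$ the map $r \mapsto r \cdot p$ from $S$ to $S$ is surjective. One can use that right translation by elements of $G$ acts transitively enough on the relevant $G^{00}$-cosets (Fact 2.1(ii) identifies $G^{00}$ as the right stabilizer of a right $f$-generic type), combined with the density of $G(M)$-translates, to hit every $q \in S$. If this transitivity argument goes through, it localizes the difficulty to a statement about the $G/G^{00}$-action, which is exactly the structure Proposition 2.5 will exploit. Either way, the heart of the matter is the cancellation/inverse computation at the level of realizations, and that is the step I'd budget the most care for.
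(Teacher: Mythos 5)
Your overall strategy --- show $S\cdot p = S$ for every $p\in S$ by exhibiting, for each $q\in S$, some $r$ with $r\cdot p = q$ --- is sound and essentially equivalent to what the paper does, and your candidate $r = q\cdot p^{-1}$ can be made to work. But the decisive step is left open, and the mechanism you expect to close it is the wrong one. You write that the two occurrences of $p$ should ``combine to an identity'' at the level of realizations. They do not: in $(q\cdot p^{-1})\cdot p$ the realization $b$ of $p^{-1}$ and the realization $c$ of $p$ are \emph{independent} (one realizes an invariant extension over the other), so $b\cdot c$ is not the identity and $p^{-1}\cdot p$ is not the realized type of $e$. What is true, and what actually closes the argument, is that $p^{-1}\cdot p$ concentrates on $C^{-1}\cdot C = G^{00}$ (where $C$ is the coset of $G^{00}$ on which $p$ concentrates), and any type concentrating on $G^{00}$ is a \emph{right identity} on $S$: this follows from Fact 2.1(ii) ($G^{00}$ is the right stabilizer of a right $f$-generic type), applied to $q|{\bar M}'$ and a realization of $p^{-1}\cdot p$ lying in $G^{00}({\bar M}')$. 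You cite Fact 2.1(ii) only in your ``alternative route''; it is in fact the indispensable ingredient of the main route as well. A secondary issue you only half-resolve: $p^{-1}$ is left $f$-generic, not right $f$-generic, so it is not an element of $S$ and the closure statement of Lemma 2.2 does not directly apply to $q\cdot p^{-1}$; one must check right $f$-genericity of that product by hand via Fact 2.1(i) (which works, since $tp(a\cdot b/{\bar M}')$ is a right translate of the right $f$-generic type $q|{\bar M}'$ by the element $b$).

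The paper's proof uses the same two ingredients but avoids the inverse type altogether: given a left ideal $I$ and $q\in I$ concentrating on a coset $C$ of $G^{00}$, pick a right $f$-generic $r\in S$ concentrating on $C^{-1}$ (such exists by translating any right $f$-generic type, using Fact 2.1(i)); then $p := r\cdot q\in I$ concentrates on $G^{00}$, and for every $q'\in S$ one has $q' = q'\cdot p\in I$ by Fact 2.1(ii). If you rewrite your argument so that the cancellation step is explicitly routed through the stabilizer fact rather than through a literal $a\cdot a^{-1}=e$, it becomes correct.
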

\begin{proof} Let $I$ be a left ideal of $S$.
\newline
{\em Claim.} $I\cap G^{00} \neq \emptyset$, namely there is $p\in I$ such that $p(x)\models x\in G^{00}$.
\newline
{\em Proof of claim.} Let $q\in I$. So $q$ determines a coset say $C$ of $G^{00}$ in $G$. Then the coset $C^{-1}$ (as an element of $G/G^{00}$) also contains a right $f$-generic type $r\in S$. Let $p = r\cdot q$. So $p\in I$, and $p(x)\models x\in G^{00}$. 

\vspace{2mm}
\noindent
Now let $q\in S$. By Fact 2.1(ii), $q\cdot p = q$, so $q\in I$.
\end{proof} 

Note than an idempotent of $S$ is precisely any element of $S\cap G^{00}$ (by Fact 2.1 for example).

\begin{Lemma} Let $p\in S$ be an idempotent. Then $p\cdot S$ meets every coset of $G^{00}$ in $G$ in exactly one element.
\end{Lemma}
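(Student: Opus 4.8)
The plan is to reformulate the statement in terms of the natural map $\pi \colon S \to G/G^{00}$ sending each type to the coset of $G^{00}$ on which it concentrates, and then to exploit the fact that $p\cdot S$ is a group. First I would check that $\pi$ is well defined: every $q\in S$ concentrates on a single coset of $G^{00}$, since all realizations of $q$ share the same type over $M_{0}$ and the quotient map $G\to G/G^{00}$ factors through $S_{G}(M_{0})$. Because $G^{00}$ is normal in $G$ (any inner automorphism carries it to another type-definable bounded-index subgroup, hence back to itself), a short computation with the defining recipe for $\cdot$ (take $a$ realizing $p|{\bar M}'$ and $b$ realizing $q$, so $p\cdot q = tp(a\cdot b/{\bar M})$) gives $\pi(p\cdot q)=\pi(p)\cdot\pi(q)$, so $\pi$ is a semigroup homomorphism into the group $G/G^{00}$. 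In particular $\pi(p)=e$ since $p$ is idempotent, whence $\pi(p\cdot q)=\pi(q)$. The assertion to be proved is then exactly that $\pi|_{p\cdot S}$ is a bijection onto $G/G^{00}$.

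For surjectivity I would use that, by Lemma 2.3, $S$ is its own (unique minimal) left ideal, so by Fact 1.4 the set $p\cdot S$ is a group with identity $p$. It suffices to see that $\pi(S)=G/G^{00}$, because $\pi(p\cdot S)=\pi(S)$ by the identity $\pi(p\cdot q)=\pi(q)$ just noted. Fixing any $q_{0}\in S$ and translating on the right by elements $g\in G$, Fact 2.1(i) yields $q_{0}\cdot g\in S$ with $\pi(q_{0}\cdot g)=\pi(q_{0})\cdot gG^{00}$; as $g$ ranges over $G$ this runs through every coset, so $\pi$ maps $S$ onto $G/G^{00}$, and hence $\pi|_{p\cdot S}$ is onto as well.

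Injectivity is where the group structure of $p\cdot S$ does the real work. Since $\pi|_{p\cdot S}$ is a homomorphism between groups, it is enough to show its kernel is trivial. If $x\in p\cdot S$ satisfies $\pi(x)=e$, then $x\in S\cap G^{00}$, so by the remark preceding the lemma $x$ is an idempotent of $S$, i.e. $x\cdot x=x$. But $x$ lies in the group $p\cdot S$ (with the same operation $\cdot$), and the only idempotent of a group is its identity, so $x=p$. Thus $\ker(\pi|_{p\cdot S})=\{p\}$ and $\pi|_{p\cdot S}$ is injective; together with surjectivity this says precisely that $p\cdot S$ meets every coset of $G^{00}$ in exactly one point.

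The step I expect to demand the most care is the first one, namely verifying that $\pi$ is genuinely well defined and multiplicative. Well-definedness rests on the coset of a realization depending only on its type over the small model $M_{0}$, and multiplicativity rests on the normality of $G^{00}$ together with the invariance built into the definition of $p\cdot q$. Once these are secured, everything else is a formal consequence of Lemma 2.3, Fact 1.4, Fact 2.1, and the identification of the idempotents of $S$ with the types concentrating on $G^{00}$.
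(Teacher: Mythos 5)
Your proof is correct and follows essentially the same route as the paper: both arguments rest on $p\cdot S$ being a group with identity $p$ together with the identification of $S\cap G^{00}$ with the idempotents of $S$ (equivalently, Fact 2.1(ii)), and your triviality-of-kernel step is just a repackaging of the paper's computation that $q^{-1}\cdot r\in G^{00}$ forces $q^{-1}\cdot r=p$. The one place you go beyond the written proof is in spelling out surjectivity onto $G/G^{00}$ (via $\pi(p\cdot q)=\pi(q)$ and the right translates $q_{0}\cdot g$ from Fact 2.1(i)), which the paper leaves implicit; that is a worthwhile addition, and it matches the observation already made in the proof of Lemma 2.3 that every coset of $G^{00}$ contains a right $f$-generic type.
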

\begin{proof} First note that if $q\in S\cap G^{00}$ then $p\cdot q = p$ by Fact 2.1(ii). Hence $p\cdot S$ meets $G^{00}$ in exactly one element. On 
the other hand we know that $p\cdot S$ is a subgroup of the semigroup $S$, and as $p \cdot p = p\in p\cdot S$, $p$ is its identity element. Now suppose 
that $q,r\in p\cdot S$ are in the same coset of $G^{00}$. So working in the group $p\cdot I$, $q^{-1}\cdot r$ is in $G^{00}$ so by what we have just 
seen must $= p$. But then $p\cdot q = r$, so $q = r$. 
\end{proof}

It follows from Lemma 2.4 that the ``ideal group" $p\cdot S$ is isomorphic to $G/G^{00}$, under the map taking $q\in p\cdot S$ to the unique coset of $G^{00}$ containing $q$. But in fact this is tautologically an isomorphism of topological groups where $p\cdot S$ is given the {\em quotient topology} (with respect to the map from the compact space $I$ to $p\cdot I$  taking $q$ to $p\cdot q$). This is because we know in advance that the topology on $G/G^{00}$ is precisely that by the map $S_{G}({\bar M}) \to G/G^{00}$ and in fact also by its restriction to the compact subspace $S$.  So summarizing, we have:

\begin{Proposition} Suppose $T$ has $NIP$, $G$ is definably amenable. Let $S$ be the space of global right $f$-generic types of $G$ under the operation $\cdot$ (as in Lemma 2.2). Then, $(S,\cdot)$ is an Ellis semigroup, is itself a minimal (closed) left ideal, and for some (any) idempotent $u\in S$, the group $u\cdot S$ (with the quotient topology) is homeomorphic to $G/G^{00}$. 
\end{Proposition}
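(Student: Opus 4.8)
The plan is to collect the components already established in Lemmas 2.2, 2.3 and 2.4, together with the discussion immediately preceding the statement, since each clause of the proposition corresponds directly to one of these. The first assertion, that $(S,\cdot)$ is an Ellis semigroup, is exactly Lemma 2.2, so nothing further is required. For the second assertion, I would observe that $S$ is closed in $S_{G}({\bar M})$ (recorded at the start of the proof of Lemma 2.2) and hence compact, and that $S$ is trivially a left ideal of itself, since $a\cdot S\subseteq S$ for every $a\in S$ by closure of $S$ under $\cdot$. Lemma 2.3 then says that $S$ has no proper left ideals at all, so $S$ is its own unique minimal (closed) left ideal.

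For the third assertion I would first produce an idempotent: by Fact 1.4(i) the closed left ideal $S$ contains one, or equivalently, by the remark following Lemma 2.3, one may take any $u\in S\cap G^{00}$, this intersection being nonempty by the Claim inside the proof of Lemma 2.3 applied to $I=S$. Fact 1.4(ii) then makes $u\cdot S$ a group with identity $u$. By Lemma 2.4 the group $u\cdot S$ meets each coset of $G^{00}$ in exactly one point, so sending $q\in u\cdot S$ to the coset of $G^{00}$ containing it is a bijection onto $G/G^{00}$; compatibility with the group operations is immediate from the definition of $\cdot$ and from the fact that $G/G^{00}$ is the quotient group of $G$ by $G^{00}$.

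The one point demanding genuine care, and the step I would regard as the main obstacle, is the topological identification: that this bijection is a homeomorphism (indeed an isomorphism of topological groups) once $u\cdot S$ is given the quotient topology induced from $S$ by the map $q\mapsto u\cdot q$. Here I would appeal to the observation in the paragraph just before the proposition, namely that the compact group topology on $G/G^{00}$ is by definition the topology pushed forward along $S_{G}({\bar M})\to G/G^{00}$, a map that factors through its restriction to the compact subspace $S$. Since the bijection $u\cdot S\to G/G^{00}$ intertwines the two quotient maps out of $S$, and since we are comparing quotients of one and the same compact Hausdorff space, the homeomorphism follows automatically; I would write out only this compatibility, leaving the routine verifications to the reader.
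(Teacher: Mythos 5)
Your proposal is correct and follows essentially the same route as the paper: the Proposition is presented there precisely as a summary assembling Lemma 2.2 (Ellis semigroup), Lemma 2.3 (no proper left ideals, hence $S$ is its own minimal closed left ideal), Lemma 2.4 together with Fact 1.4 (the bijection of $u\cdot S$ with $G/G^{00}$), and the paragraph preceding the statement for the ``tautological'' identification of the quotient topologies. Your added details on producing the idempotent via $S\cap G^{00}\neq\emptyset$ match the remark following Lemma 2.3 and the Claim in its proof.
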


\section{Measure-stable groups} 
We again assume that $T$ has $NIP$ and $G$ is a $\emptyset$-definable group. 

\begin{Fact} The following are equivalent:
\newline
(i) There is some $p(x)\in S_{G}({\bar M})$ such that for some countable model $M_{0}$ and any $g\in G$, $g\cdot p$ is finitely satisfiable in 
$M_{0}$,
\newline
(ii) There is a global left $G$-invariant Keisler measure $\mu$ concentrating on $G$ such that $\mu$ is {\em generically stable}, i.e. for some countable model $M_{0}$, $\mu$ is both definable over and finitely satisfiable in $M_{0}$.
\end{Fact}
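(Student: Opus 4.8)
The plan is to prove the two implications separately, disposing of (ii) $\Rightarrow$ (i) directly and reserving the real work for (i) $\Rightarrow$ (ii).

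For (ii) $\Rightarrow$ (i) I would argue as follows. Suppose $\mu$ is a global left $G$-invariant Keisler measure, finitely satisfiable in the countable model $M_{0}$ (I will not even need definability or the full strength of generic stability here). Pick any $p$ in the support of $\mu$, i.e. any $p\in S_{G}({\bar M})$ with $\mu(\phi)>0$ for every $\phi\in p$; such $p$ exists since $\mu$ is a probability measure on a compact space. First, every type in the support is finitely satisfiable in $M_{0}$: if $\phi(x)\in p$ then $\mu(\phi)>0$, so by finite satisfiability of $\mu$ the set $\phi$ is realized in $M_{0}$, and as $p$ is complete this covers every finite conjunction of members of $p$. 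Second, for $g\in G$ the homeomorphism $\pi_{g}$ carries the support of $\mu$ onto the support of $g\cdot\mu$; since $g\cdot\mu=\mu$ by invariance, the support is $G$-invariant, so $g\cdot p$ again lies in it and is therefore finitely satisfiable in $M_{0}$. Concretely, $g\cdot\mu(\phi)=\mu(g^{-1}\cdot\phi)$ and $g^{-1}\cdot\phi\in p$, so positivity is preserved. This yields a single $p$ all of whose left translates are finitely satisfiable in $M_{0}$, which is exactly (i).

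For (i) $\Rightarrow$ (ii) one must manufacture an invariant measure out of the single type $p$. My plan is: since $p$ is finitely satisfiable in $M_{0}$ it is $M_{0}$-invariant, so I can build a Morley sequence $I=(a_{i}:i<\omega)$ of $p$ over $M_{0}$ inside ${\bar M}$ and form the average measure $\mu_{0}=\mathrm{Av}(I)$, where $\mu_{0}(\phi(x,b))$ is the limiting frequency of $\{\,i : \models\phi(a_{i},b)\,\}$. The role of $NIP$, via the VC theorem applied to the indiscernible sequence $I$, is to guarantee that these frequencies converge, so that $\mu_{0}$ is a well-defined global Keisler measure which is definable over and finitely satisfiable in $M_{0}$, hence generically stable. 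The decisive remaining step is to pass from $\mu_{0}$ to a measure that is additionally left $G$-invariant while retaining generic stability.

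I expect this last step, establishing left-invariance, to be the main obstacle. One cannot simply invoke an abstract fixed-point theorem on the compact convex space of measures finitely satisfiable in $M_{0}$, for two reasons: that space is not $G$-invariant (a translate $g\cdot\nu$ of a measure finitely satisfiable in $M_{0}$ is only finitely satisfiable in $M_{0}g$), and $G({\bar M})$ need not be amenable as an abstract discrete group, fsg groups including definably compact groups whose point-sets contain free subgroups. What replaces amenability is the symmetry of generically stable objects. The plan is to work inside the compact convex $G$-invariant set $K=\overline{\mathrm{conv}}\{\,g\cdot p : g\in G\,\}$, all of whose members are finitely satisfiable in $M_{0}$ (finite satisfiability passes through convex combinations and weak$^{*}$ limits, and $g'\cdot\overline{\mathrm{conv}}\{g\cdot p\}=\overline{\mathrm{conv}}\{g'g\cdot p\}=K$), and to locate the invariant measure in $K$ by exploiting the Fubini/commutation property of the generically stable $\mu_{0}$, namely $\mu_{0}\otimes\nu=\nu\otimes\mu_{0}$ computed through the graph of multiplication, to show that a suitable convolution average of $\mu_{0}$ is fixed by every $g\in G$. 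Generic stability is inherited along the way since it is witnessed over the single countable $M_{0}$, and once one left $G$-invariant generically stable measure is in hand, genericity of $p$ pins it down (recovering en route the uniqueness of the invariant measure in the fsg case).
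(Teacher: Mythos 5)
This statement is quoted in the paper as a Fact, with no proof given there; it is the definition/characterization of $fsg$ groups imported from \cite{NIPI} and Theorem 7.7 of \cite{NIPII}, so your attempt has to be judged against the arguments in those sources. Your direction (ii) $\Rightarrow$ (i) is correct and is the standard argument: every type in the support of a left-invariant measure finitely satisfiable in $M_{0}$ is itself finitely satisfiable in $M_{0}$, and the support is $G$-invariant. You are also right, and it is a genuine insight, that (i) $\Rightarrow$ (ii) cannot be obtained from abstract amenability of $G({\bar M})$.

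The gap is in (i) $\Rightarrow$ (ii), and it is twofold. First, your measure $\mu_{0}=\mathrm{Av}(a_{i}:i<\omega)$ degenerates: under $NIP$ the truth value of $\phi(a_{i},b)$ along an indiscernible sequence is \emph{eventually constant}, so the limiting frequency is $0$ or $1$ and $\mu_{0}$ is just the limit type of the Morley sequence --- another type finitely satisfiable in $M_{0}$, not a genuinely spread-out measure. It is generically stable only if $p$ already was, which fails in the motivating examples (for a definably compact group in an $o$-minimal theory no global type is generically stable, yet the group satisfies (i)). Consequently the ``Fubini/commutation property of the generically stable $\mu_{0}$'' that your invariance step leans on is not available, and that step --- which you yourself identify as the decisive one --- remains a plan rather than a proof. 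The argument that actually works runs differently: from (i) one first shows that the nongeneric definable subsets of $G$ form an ideal and that generic types exist; for a global generic $p$ one shows $G^{00}$ stabilizes $p$, so $gG^{00}\mapsto g\cdot p$ is well defined; Borel definability of $M_{0}$-invariant types under $NIP$ makes each set $\{gG^{00}: \phi\in g\cdot p\}$ Haar-measurable in the compact group $G/G^{00}$, and pushing forward the Haar measure yields the left-invariant $\mu$, which is finitely satisfiable in $M_{0}$ because each $g\cdot p$ is; definability of $\mu$ over $M_{0}$ (hence generic stability) is then a further argument, as in Theorem 7.7 of \cite{NIPII}. None of these ingredients --- the ideal of nongenerics, the stabilizer computation, Borel definability, the Haar measure on $G/G^{00}$ --- appears in your outline, so the implication is not established.
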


\noindent
{\em Commentary.} We discuss the notions of generic stability in (ii). To say that $\mu$ is definable over $M_{0}$, means that for any $L$-formula $\phi(x,y)$ and closed set $I\subset [0,1]$, the set of $b\in {\bar M}$ such that $\mu(\phi(x,b))\in I$ is type-definable over $M_{0}$. To say that $\mu$ is finitely satisfiable in $M_{0}$ means that any formula over ${\bar M}$ with positive $\mu$-measure is realized by a tuple from $M_{0}$. When $\mu$ is a type $p(x)$ we get the notion of a generically stable type. A characteristic property of stable theories is that every global type is generically stable: definable over and finitely satisfiable in some countable model $M_{0}$. 
\newline
Let us also remark that in both parts (i) and (ii) above we can replace ``some countable model $M_{0}$" by ``any countable model $M_{0}$". 
\\

Groups satisfying the equivalent conditions in Fact 3.1 were first called $fsg$ (for ``finitely satisfiable generics") groups, and later groups which are 
{\em generically stable for measure}. Here we rebaptize them  as {\em measure-stable} groups. Among measure-stable groups are stable groups, as well as definably compact groups in $o$-minimal structures and certain valued fields (algebraically closed, real closed, $p$-adically closed).
The  Keisler measure $\mu$ in Fact 3.1(ii) is in fact the unique global left-invariant Keisler measure on $G$ and also the unique right-invariant Keisler measure on $G$ (Theorem 7.7 of \cite{NIPII}). \\


\begin{Fact} Assume $G$ to be measure-stable, let $\mu$ be as in 3.1(ii), and let $X$ be a definable subset of $G$. Then the following are equivalent:
\newline
(i) $\mu(X) > 0$,
\newline
(ii) $X$ is left generic,
\newline
(iii) $X$ is right generic,
\newline
(iv) every left $G$-translate of $X$ is satisfiable in $M_{0}$ (i.e. meets $G(M_{0})$),
\newline
(v) every right $G$-translate of $X$ is satisfiable in $M_{0}$. 
\end{Fact}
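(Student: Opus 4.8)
The plan is to prove the five-way equivalence for a measure-stable group $G$ with its unique invariant measure $\mu$ by establishing a cycle of implications, exploiting the symmetry between the left and right versions. First I would set up the easy direction $(ii)\Rightarrow(i)$: if $X$ is left generic, then finitely many left translates $g_1\cdot X,\ldots,g_n\cdot X$ cover $G$, so by finite subadditivity of $\mu$ and left-invariance (each $\mu(g_i\cdot X)=\mu(X)$) we get $n\cdot\mu(X)\geq\mu(G)=1$, forcing $\mu(X)>0$. The same argument with right translates and the right-invariance of $\mu$ (recall $\mu$ is simultaneously the unique left- and right-invariant measure by the remark after Fact 3.1) gives $(iii)\Rightarrow(i)$.

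The crux is the reverse direction $(i)\Rightarrow(ii)$, i.e.\ recovering genericity from positive measure, and here I expect the main obstacle to lie. The natural tool is the finite satisfiability of $\mu$ in $M_0$ together with the definability of $\mu$ over $M_0$. The idea is to show that if $X$ is \emph{not} left generic, then $\mu(X)=0$. Suppose no finite union of left translates covers $G$; by compactness/saturation the family $\{\neg(g\cdot X):g\in G\}$ is consistent, so there is $h\in G$ with $h\notin g\cdot X$ for all $g$ in some configuration, and one arranges a coheir/indiscernible sequence of translates witnessing that the measures must sum to a finite value, forcing each $\mu(g\cdot X)=\mu(X)$ to be $0$. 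More cleanly, I would invoke that for generically stable $\mu$ the measure of $X$ is computed as the frequency of $X$ along a Morley sequence in $\mu$, and non-genericity of $X$ gives, via finite satisfiability in $M_0$, a uniform bound making that frequency zero. This is the step where the full strength of \emph{both} definability and finite satisfiability of $\mu$ (equivalently, the hypotheses of Fact 3.1(ii)) is needed, rather than mere invariance.

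Next I would handle the satisfiability conditions $(iv)$ and $(v)$. For $(i)\Rightarrow(iv)$: if $\mu(X)>0$ then $\mu(g\cdot X)>0$ for every $g$ by left-invariance, and since $\mu$ is finitely satisfiable in $M_0$, any formula of positive measure is realized in $M_0$, so $g\cdot X$ meets $G(M_0)$. For $(iv)\Rightarrow(ii)$: if every left translate of $X$ meets $M_0$, one shows $X$ is left generic by a compactness argument—if not, the consistency of $\{\neg(g\cdot X)\}$ would produce a translate omitting $M_0$, contradicting $(iv)$; here one uses that $M_0$ is a model so that a type over it finitely satisfiable configuration collapses to genericity. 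The symmetric statements $(i)\Leftrightarrow(iii)\Leftrightarrow(v)$ follow verbatim by replacing left by right throughout and using right-invariance and the right-generic analogue. Assembling the cycle $(ii)\Rightarrow(i)\Rightarrow(iv)\Rightarrow(ii)$ together with its mirror image, and noting $(i)$ is manifestly left-right symmetric since $\mu$ is both-sided invariant, yields the full equivalence. The genuinely substantive content is thus concentrated in $(i)\Rightarrow(ii)$, where the generic stability of $\mu$—and not just its invariance—is what guarantees that positive measure upgrades to the combinatorial covering property.
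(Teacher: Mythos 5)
First, note that the paper offers no proof of this statement: it is quoted as a Fact, imported from \cite{NIPI} (Proposition 4.2) and \cite{NIPII} (Theorem 7.7), so there is no in-paper argument to compare against. Judged on its own terms, your architecture --- a cycle through (i), (iv), (v) driven by the two-sided invariance and finite satisfiability of $\mu$ plus a compactness argument --- is viable and is genuinely different from the literature's development, which first proves (ii)$\Leftrightarrow$(iii)$\Leftrightarrow$(iv)$\Leftrightarrow$(v) with no measure in sight and only afterwards constructs $\mu$ and identifies positive measure with genericity. Your route is shorter, but only because it imports the two-sided invariance and finite satisfiability of $\mu$ as black boxes; be aware that in the sources those properties are established after (and partly via) these equivalences, so your reorganization is legitimate here but would be circular in a from-scratch treatment. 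The steps (ii)$\Rightarrow$(i), (iii)$\Rightarrow$(i) and (i)$\Rightarrow$(iv), (i)$\Rightarrow$(v) are correct as you state them.

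There are two concrete problems. First, your compactness step proves the wrong-handed implication. If $X$ is not left generic, the partial type $\{x\notin g\cdot X : g\in G\}$ is finitely satisfiable; a realization $h$ (in an extension, pulled back into $\bar{M}$ by saturation since ``$X\cdot h^{-1}$ misses $G(M_{0})$'' is a small partial type in $h$) satisfies $g^{-1}\cdot h\notin X$ for all $g$, i.e.\ the \emph{right} translate $X\cdot h^{-1}$ misses $G(M_{0})$. So this argument gives (v)$\Rightarrow$(ii) and, symmetrically, (iv)$\Rightarrow$(iii) --- not (iv)$\Rightarrow$(ii) as you claim. The five-way equivalence still closes, via (ii)$\Rightarrow$(i)$\Rightarrow$(v)$\Rightarrow$(ii) and (iii)$\Rightarrow$(i)$\Rightarrow$(iv)$\Rightarrow$(iii), but you must restate the implication; as written it is false as a single step. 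Second, your direct argument for (i)$\Rightarrow$(ii) --- ``arrange a coheir/indiscernible sequence of translates'' and ``non-genericity gives a uniform bound making the Morley-sequence frequency zero'' --- is not a proof: no mechanism is given for why non-genericity forces the frequency to vanish, and I do not believe that sketch can be completed as stated. Fortunately it is also redundant: once the crossover is corrected, (i)$\Rightarrow$(v)$\Rightarrow$(ii) already delivers this implication, so you should simply delete that paragraph and rely on the cycle.
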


Note in particular that the family of non generic definable subsets of $G$ is an ideal (in the Boolean algebra of definable subsets of $G$). \\

Let us fix now a small model $M$, which may or may not be $M_{0}$. 

\begin{Definition} (i) Let $X\subseteq G(M)$ be externally definable. We call $X$ left-generic in $G(M)$ if finitely many left translates $g\cdot X$ of $X$ by elements $g\in G(M)$ cover $G(M)$. Likewise for right-generic.
\newline
(ii) An external type $p\in S_{G,ext}(M)$ is said to be left-generic if every set in $p$ is left-generic. Likewise for right-generic.
\end{Definition}

One of our main results is:
\begin{Theorem} Assume $G$ is measure stable. Then
\newline
(i) Let $X\subseteq G$ be definable (with parameters from ${\bar M}$). Then $X$ is generic in $G$ if and only if $X\cap G(M)$ is (left, right) generic in $G(M)$.
\newline
(ii) The natural map taking definable $X\subseteq G$ to $X\cap G(M)$ induces a bijection between left (right) generic types in $S_{G}({\bar M})$ and left (right) generic types in $S_{G,ext}(M)$. 
\newline
(iii) In particular left and right generic types in $S_{G,ext}(M)$ coincide and such things exist.

\end{Theorem}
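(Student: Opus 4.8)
The plan is to prove the three parts in the order (i), (ii), (iii), deducing (ii) and (iii) as consequences of the generic-transfer statement in (i). The key tool throughout will be Fact 3.2, which says that for a definable subset $X \subseteq G$, being left generic, right generic, having positive $\mu$-measure, and having every translate meet $G(M_0)$ are all equivalent. Since these notions collapse into one for definable sets, the substance of the theorem is transporting this collapse across the correspondence between definable subsets of $G$ (over $\bar M$) and externally definable subsets of $G(M)$.

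For part (i), I would argue as follows. Fix a definable $X \subseteq G$, defined over $\bar M$. First suppose $X$ is generic in $G$, so by Fact 3.2 every left translate of $X$ is satisfiable in $M_0$, and finitely many left translates $g_1 X, \dots, g_n X$ (with $g_i \in G$) cover $G$. The difficulty is that the covering witnesses $g_i$ live in $G(\bar M)$, not in $G(M)$, whereas left-genericity of $X \cap G(M)$ in the sense of Definition 3.3 requires covering $G(M)$ by translates using elements of $G(M)$. The natural move is to use finite satisfiability in $M_0 \subseteq M$: genericity of $X$ gives, by Fact 3.2(iv), that every translate is satisfiable in $M_0$, and one wants to pull the covering back to $M$. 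I expect the cleanest route is via the measure $\mu$ restricted to externally definable sets, or via a compactness argument: the statement ``$G \subseteq g_1 X \cup \dots \cup g_n X$'' is a first-order consequence, and one replaces the parameters $g_i$ by elements of $G(M)$ realizing the same type over the relevant parameters, using saturation of $\bar M$ together with the $M_0$-finite-satisfiability to keep control. Conversely, if $X \cap G(M)$ is left generic in $G(M)$, I would show $X$ is generic by contraposition: if $X$ is non-generic then by Fact 3.2 its complement is generic (using that the non-generic sets form an ideal, as noted after Fact 3.2), so some translate of the complement avoids $M_0$, contradicting that translates of $X\cap G(M)$ cover $G(M) \supseteq G(M_0)$.

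For part (ii), once (i) is established, the bijection is essentially bookkeeping via Fact 1.1. A definable $X \subseteq G$ over $\bar M$ maps to the externally definable set $X \cap G(M)$, and a global type $p \in S_{G,M}(\bar M)$ corresponds to the external type $p|_{S_{G,ext}(M)}$. By Fact 1.1(ii) this is a homeomorphism between $S_{G,ext}(M)$ and $S_{G,M}(\bar M)$; but here I must be careful, since the generic types in $S_G(\bar M)$ need not all be finitely satisfiable in $M$. The point is that left (right) generic global types \emph{are} automatically finitely satisfiable in $M_0$ (indeed in $M$): this follows from Fact 3.2(iv), since every formula in a generic type is generic, hence satisfiable in $M_0 \subseteq M$. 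So the generic global types lie inside $S_{G,M}(\bar M)$, and (i) guarantees that a global type is generic exactly when its trace on externally definable sets is generic, giving the desired bijection on the nose.

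Part (iii) is then immediate: since left generic and right generic coincide for definable subsets of $G$ by Fact 3.2 (conditions (ii) and (iii) there), and the bijection in (ii) respects this, left and right generic external types coincide. Existence of generic types follows because generic definable sets exist (for instance, $G$ itself is generic, and more sharply the generic global types are nonempty since $\mu$ has a generic type in its support, or directly because measure-stable groups have generic global types by the standard $fsg$ theory). I expect the main obstacle to be the forward direction of (i): transferring a covering by translates with parameters in $G(\bar M)$ to one with parameters in $G(M)$, where the interplay between genericity, the invariant measure $\mu$, and finite satisfiability in $M_0$ must be handled carefully rather than by a naive compactness argument.
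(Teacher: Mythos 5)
There is a genuine gap, and it sits exactly where you suspect: the forward direction of (i). The one concrete mechanism you propose --- take a covering $G=g_1X\cup\dots\cup g_nX$ with $g_i\in G(\bar M)$ and ``replace the parameters $g_i$ by elements of $G(M)$ realizing the same type over the relevant parameters'' --- does not work: the $g_i$ are arbitrary witnesses, and there is no reason whatsoever for $tp(g_1,\dots,g_n/b)$ ($b$ the parameters of $X$) to be realized in, or even finitely satisfiable in, $M$. The paper's proof of this direction (Lemma 3.7) rests on two substantive external inputs that your proposal never invokes: \emph{generic compact domination} (Proposition 5.8 of \cite{NIPIII}), which produces a coset $C$ of $G^{00}$ and a nongeneric $W$ with $C\subseteq X\cup W$, and Fact 3.6(ii) (from \cite{HPS}), which says that finitely many translates of $W$ by a Morley sequence in a generic type have empty intersection. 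Compactness then replaces $C$ by $M$-definable generic sets $D\supseteq C$ and $D'\subseteq\bigcap_i g_iD$, and finite satisfiability of the generic type in $M$ is used only at the very end to move the tuple $(g_1,\dots,g_n)$ --- which was chosen to realize a specific $M$-invariant type, not an arbitrary covering witness --- into $G(M)$. Without something playing the role of these two facts, your sketch of the forward direction is not a proof. (I will add that the ingredients you do name --- Fact 3.2(iv)/(v) and saturation --- can be assembled into an argument: if no finite family of $G(M)$-translates of $X$ covers $G$, saturation yields $a\in G(\bar M)$ with $a\notin hX$ for all $h\in G(M)$, whence the right translate $Xa^{-1}$ of the generic set $X$ is disjoint from $G(M)$, contradicting Fact 3.2(v) applied to a countable $M_0'\prec M$. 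But you did not carry this out, and it is not the route the paper takes.)

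Two smaller points. In your converse direction of (i) the sentence ``its complement is generic \dots, so some translate of the complement avoids $M_0$'' is wrong as stated: a \emph{generic} set has \emph{every} translate meeting $G(M_0)$. The correct version (and essentially the paper's Lemma 3.5) is: the covering gives $G(M)\subseteq Z:=\bigcup_i h_iX$, so $Z^c$ misses $G(M)$ and is therefore nongeneric by Fact 3.2(iv); since the nongenerics form an ideal, $Z$ and hence $X$ is generic. Your treatment of (ii) and (iii) is fine and matches the paper: generic global types are finitely satisfiable in $M$ because each of their formulas is generic, so they lie in $S_{G,M}(\bar M)$ and Fact 1.1 plus part (i) gives the bijection; the coincidence of left and right external generics and their existence then follow from Fact 3.2.
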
 

We work towards a proof of Theorem 3.4. The main point is (i). We assume now that $G$ is measure-stable. The easy ``direction" is:
\begin{Lemma} Suppose $X\subseteq G$ is definable and $X\cap G(M)$ is (left) generic. Then $X$ is (left, so also right) generic.
\end{Lemma}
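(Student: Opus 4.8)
The plan is to argue by contradiction using the canonical left-invariant Keisler measure $\mu$ together with its finite satisfiability. First I would unwind the hypothesis. Since $X\cap G(M)$ is left generic in $G(M)$, there are $g_1,\dots,g_n\in G(M)$ with $G(M)=\bigcup_{i=1}^n g_i\cdot(X\cap G(M))$. Because each $g_i$ lies in $G(M)$, left translation by $g_i$ preserves $G(M)$, so $g_i\cdot(X\cap G(M))=(g_i\cdot X)\cap G(M)$; hence $G(M)\subseteq\bigcup_{i=1}^n g_i\cdot X$. Equivalently, the definable subset $Y:=G\setminus\bigcup_{i=1}^n g_i\cdot X$ of $G$ contains no point of $G(M)$, i.e.\ $Y$ is not satisfiable in $M$.

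Next I would bring in the measure. Let $\mu$ be the unique global left $G$-invariant Keisler measure as in Fact 3.1(ii). By the remark following Fact 3.1 I may take the base model to be any countable model, so I would fix a countable elementary submodel $M_0\prec M$; then $\mu$ is finitely satisfiable in $M_0$ and $M_0\subseteq M$. Suppose toward a contradiction that $X$ is not left generic. By the equivalence (i)$\Leftrightarrow$(ii) of Fact 3.2 this means $\mu(X)=0$. Left invariance gives $\mu(g_i\cdot X)=\mu(X)=0$ for each $i$, and finite additivity gives $\mu(\bigcup_i g_i\cdot X)=0$; since $\mu(G)=1$, we get $\mu(Y)=1>0$.

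Finally I would derive the contradiction from finite satisfiability: $\mu(Y)>0$ forces $Y$ to be realized by a tuple from $M_0$, i.e.\ $Y\cap G(M_0)\neq\emptyset$. But $M_0\subseteq M$, so $Y\cap G(M)\neq\emptyset$, contradicting the conclusion of the first paragraph that $Y$ is not satisfiable in $M$. Hence $\mu(X)>0$, so by Fact 3.2 $X$ is left generic, and since for measure-stable groups left and right generic coincide (Fact 3.2 (ii)$\Leftrightarrow$(iii)), $X$ is right generic as well.

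The argument is essentially routine once one decides to pass through $\mu$; the only point requiring care — and the genuine obstacle — is the mismatch between the model $M$ appearing in the hypothesis and the countable model $M_0$ witnessing generic stability of $\mu$. This is exactly what the ``any countable model'' strengthening in the remark after Fact 3.1 is for: it lets me choose $M_0$ \emph{inside} $M$, so that unsatisfiability of $Y$ in $M$ transfers to unsatisfiability in $M_0$, which is what the finite-satisfiability clause of generic stability can contradict.
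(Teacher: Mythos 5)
Your proof is correct and is essentially the paper's argument: both form the union $Z=\bigcup_i g_i\cdot X$ of translates, observe that its complement misses $G(M)$, and conclude via finite satisfiability in a countable submodel that the complement is nongeneric, hence $X$ is generic. The only cosmetic difference is that you unwind this through the measure $\mu$ and Fact 3.2 (i)$\Leftrightarrow$(ii), while the paper cites the satisfiability clauses of Fact 3.2 directly; your explicit choice of $M_0\prec M$ via the ``any countable model'' remark is exactly the point the paper leaves implicit.
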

\begin{proof} Let $g_{1},..,g_{n}\in G(M)$ be such that  $G(M) = g_{1}\cdot(X\cap G(M)) \cup ... \cup g_{n}\cdot(X\cap G(M))$. Let $Z = \cup_{i}g_{i}\cdot X$ (a definable subset of $G = G({\bar M})$). Hence $Z$ contains $G(M)$, whereby the complement $Z^{c}$ of $Z$ in $G$ must be nongeneric (by Fact 3.2). Hence $Z$ is generic, whereby $X$ is generic too. 
\end{proof}

Note that it follows from  Lemma 3.5 that if $p\in S_{G,ext}(M)$ is left generic, then $p^{\bar M}$ (with notation from Fact 1.1) is a global generic type. \\

For the other direction we will make use of ``generic compact domination" from \cite{NIPIII}, as well as the following result proved in \cite{HPS} (Proposition 3.2 and its proof, as well as Corollary 3.3):
\begin{Fact} ($G$ measure-stable) (i) A global type is (left, right) generic iff it is (left, right) $f$-generic.
\newline
(ii) Moreover, suppose $W\subseteq G$ is definable and nongeneric. Let $p$ be a global generic type. Let $M'>M$ be a model over which $W$ is defined. Then for some $n$, if $(g_{1},..,g_{n})$ realizes $p^{(n)}|M'$ then $\cap_{i}g_{i}\cdot W = \emptyset$ (and also $\cap_{i}W\cdot g_{i} = \emptyset$).  
\end{Fact}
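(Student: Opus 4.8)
The plan is to treat the two parts separately: part (i) by identifying generic global types with the support of the invariant measure, and part (ii) by a frequency/VC argument driven by generic stability of $p$. Throughout I would reduce ``generic'' to ``positive $\mu$-measure'' via Fact 3.2, where $\mu$ is the (unique) left-invariant generically stable Keisler measure of Fact 3.1(ii). For the direction generic $\Rightarrow$ $f$-generic: if $X$ is generic then $\mu(X)>0$, and since $\mu$ is definable over (hence $M_0$-invariant) and left-invariant, every translate has $\mu(g\cdot X)=\mu(X)>0$. A set of positive measure under an $M_0$-invariant measure cannot divide over $M_0$: if $g\cdot X=\phi(x,b)$ divided, an $M_0$-indiscernible $(b_i)$ with $b_0=b$ would yield $k$-inconsistent sets $\phi(x,b_i)$ all of the same positive measure $\epsilon$ (by $M_0$-invariance), contradicting the pointwise bound $\sum_{i<N}\mu(\phi(x,b_i))\le k-1$ once $N>(k-1)/\epsilon$. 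By \cite{Chernikov-Kaplan} non-dividing equals non-forking here, so $g\cdot X$ does not fork and $X$ is $f$-generic.

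For the converse $f$-generic $\Rightarrow$ generic I would invoke the characterization from \cite{NIPII} that the global $f$-generic types are exactly the topological support of the ($G$-invariant) measures; since $\mu$ is unique in the measure-stable case, $p$ being $f$-generic forces $p\in\mathrm{supp}(\mu)$, so every formula in $p$ has positive $\mu$-measure, hence (Fact 3.2) is generic, and $p$ is generic. This is the main obstacle in (i): it is precisely where the fsg hypothesis and uniqueness of $\mu$ enter, since $f$-genericity (non-forking over $M_0$) is a priori strictly weaker than finite satisfiability in $M_0$.

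For (ii) the engine is that a generic type $p$ of a measure-stable group is generically stable, hence ``finitely approximated'' (fim). Fix the formula $\chi(y;z)$ expressing $y^{-1}z\in W$; then for each parameter $a$ the instance $\chi(y;a)$ defines the set $a\cdot W^{-1}$, a left translate of $W^{-1}$, which is nongeneric because $W$ nongeneric gives $W^{-1}$ nongeneric (left and right genericity coincide by Fact 3.2) and translates of nongeneric sets are nongeneric. The fim property applied with $\epsilon=\tfrac12$ produces an integer $n$, depending only on $\chi$ and crucially uniform in $a$, such that for any $(g_1,\dots,g_n)\models p^{(n)}|M'$ the sampling frequency of $\chi(\cdot;a)$ is within $\tfrac12$ of $d_p\chi(a)\in\{0,1\}$ for every $a$. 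I then argue by contradiction: if $\bigcap_{i\le n}g_i\cdot W\neq\emptyset$, choose $a$ in the intersection, so $a\in g_i\cdot W$, i.e.\ $\chi(g_i;a)$ holds, for all $i\le n$; the frequency is then $1$, forcing $d_p\chi(a)=1$, i.e.\ $a\cdot W^{-1}\in p$. But $a\cdot W^{-1}$ is nongeneric whereas $p$ is generic and so contains no nongeneric formula, a contradiction. Hence $\bigcap_{i\le n}g_i\cdot W=\emptyset$, and the twin statement $\bigcap_i W\cdot g_i=\emptyset$ follows by the symmetric argument with right translates, again using the left/right symmetry of Fact 3.2.

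The step I expect to be the main obstacle in (ii) is securing the \emph{uniform} frequency approximation in the parameter $a$: that a single $n$ works simultaneously for all instances $\chi(\cdot;a)$. This is exactly the content of generic stability (the fim property) together with $NIP$ (finite VC-dimension of the family $\{a\cdot W^{-1}\}_a$), for which I would cite \cite{NIPII}; the remaining ingredients — the translate and inversion symmetries and the fact that $p$ omits all nongeneric formulas — are routine consequences of Fact 3.2 and the definition of a generic type.
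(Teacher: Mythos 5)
First, a point of reference: the paper does not prove this statement at all --- it is quoted as a Fact, with an explicit citation to \cite{HPS} (Proposition 3.2 and its proof, together with Corollary 3.3). So your proposal must be judged on its own merits, and it has a genuine gap in each part. In part (i), the direction generic $\Rightarrow$ $f$-generic is correct and standard: every translate of a generic set has positive measure under the $M_0$-invariant measure $\mu$, and your counting bound $\sum_{i<N}\mu(\phi(x,b_i))\le k-1$ correctly rules out dividing (with \cite{Chernikov-Kaplan}, or just subadditivity over the disjunction defining forking, finishing the job). But the converse leans on ``global $f$-generic types are exactly the supports of $G$-invariant measures'', which you attribute to \cite{NIPII}. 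That reference contains only the easy inclusion (types in the support of an invariant measure are $f$-generic --- in effect the argument you already gave); the reverse inclusion is a substantially later theorem of Chernikov and Simon on definably amenable $NIP$ groups, not available among the paper's references, and in the measure-stable case it is essentially equivalent to the very statement (i) being proved. As written, this direction is unproved, and the obstacle you correctly flag (non-forking over $M_0$ is weaker than finite satisfiability in $M_0$) is exactly what your citation does not discharge.

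In part (ii) the central claim --- ``a generic type $p$ of a measure-stable group is generically stable, hence fim'' --- is false. Definably compact groups in $o$-minimal theories are measure-stable (the paper lists them as such), yet $o$-minimal theories admit no nonrealized generically stable types (total indiscernibility of a Morley sequence fails against the definable order), while the global generic types of, say, $SO_2$ in a real closed field are nonrealized. Such generics are finitely satisfiable in $M_0$ (each generic formula meets $G(M_0)$, by Fact 3.2(iv)) but not definable, so the quantity $d_p\chi(a)$ in your frequency argument is not even well-defined, and no uniform-in-$a$ frequency approximation for realizations of $p^{(n)}$ is available. What is generically stable (and fim, by \cite{NIPIII}) is the unique invariant \emph{measure} $\mu$; your sampling computation is exactly the right shape for $\mu^{(n)}$-random tuples, and your reductions (that $\chi(y;a)$ defines $a\cdot W^{-1}$, that $W^{-1}$ is nongeneric via the left/right and inversion symmetries, that a generic type omits nongeneric formulas) are all fine. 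The missing step is the bridge from $\mu$ to an arbitrary generic type $p$: knowing $p\in\mathrm{supp}(\mu)$ only gives that formulas of $p^{(n)}$ have \emph{positive} $\mu^{(n)}$-measure, which is consistent with ``$\bigcap_i g_i\cdot W\neq\emptyset$'' lying in $p^{(n)}$ even when $\mu^{(n)}$-most tuples witness $\bigcap_i g_i\cdot W=\emptyset$. Transferring the fim estimate for $\mu$ to the conclusion about $p^{(n)}|M'$ is precisely the nontrivial content of Proposition 3.2 of \cite{HPS}, and it is absent from your proposal.
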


\begin{Lemma} Let $X\subseteq G$ be definable and generic. Then $X\cap G(M)$ is left and right generic

\end{Lemma}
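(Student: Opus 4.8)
The plan is to prove that a generic definable $X \subseteq G$ has the property that $X \cap G(M)$ is left and right generic in $G(M)$, which is the ``hard direction'' complementing Lemma 3.5. By the symmetry between left and right genericity furnished by Fact 3.2 (where conditions (ii) and (iii) are equivalent), it suffices to establish one of the two; I will argue for left genericity. The key idea is to exploit the fact that $X$ being generic is equivalent to its complement $W = G \setminus X$ being nongeneric (Fact 3.2), and then to use the strong consequence of nongenericity provided by generic compact domination and Fact 3.7(ii).

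First I would reformulate the goal: to show $X \cap G(M)$ is left generic in $G(M)$, I must find finitely many $h_1, \dots, h_m \in G(M)$ with $G(M) = \bigcup_i h_i \cdot (X \cap G(M))$. Equivalently, I want no element of $G(M)$ to lie in every $h_i \cdot W$; that is, I want $\bigcap_i h_i^{-1} \cdot (\text{something})$ to miss $G(M)$ after suitable translation. The natural move is to apply Fact 3.7(ii) to the nongeneric set $W$: fixing a global generic type $p$ and a model $M' > M$ over which $W$ (and $X$) is defined, there is some $n$ such that whenever $(g_1, \dots, g_n)$ realizes $p^{(n)}|M'$ one has $\bigcap_i g_i \cdot W = \emptyset$. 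The challenge is to descend from these $g_i$, which live in the large model $\bar M$, to actual elements of $G(M)$ that do the covering.

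The main obstacle, and the technical heart of the argument, is precisely this descent from realizations of $p^{(n)}$ in $\bar M$ down to witnesses in $G(M)$. The plan here is to use the characterization of genericity in terms of finite satisfiability. Since $G$ is measure-stable, the generic type $p$ and its powers are finitely satisfiable in $M_0$ (and, after enlarging appropriately, the relevant data can be arranged over $M$); by Fact 3.2(iv) every left translate of a generic set meets $G(M_0)$. I would argue that the condition $\bigcap_{i=1}^{n} g_i \cdot W = \emptyset$ is a first-order (indeed closed) condition on the tuple $(g_1, \dots, g_n)$ over $M'$, and that finite satisfiability of $p^{(n)}$ allows one to replace the $g_i$ by a tuple $(h_1, \dots, h_n)$ from $G(M)$ (or from a suitable small model, which one then arranges to be $M$) still satisfying $\bigcap_i h_i \cdot W = \emptyset$. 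Once such $h_i \in G(M)$ are secured, the emptiness of $\bigcap_i h_i \cdot W$ means $\bigcup_i h_i \cdot X = G$, and intersecting with $G(M)$ gives $G(M) = \bigcup_i h_i \cdot (X \cap G(M))$ after replacing each $h_i$ by $h_i^{-1}$ in the covering formulation, as desired.

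Finally, having obtained left genericity of $X \cap G(M)$, I would invoke Fact 3.2 once more to conclude that $X$ being generic also yields right genericity of $X \cap G(M)$ by the symmetric argument (applying Fact 3.7(ii) with the right-translate conclusion $\bigcap_i W \cdot g_i = \emptyset$ that is also provided there). I expect the subtlety to lie in correctly managing over which model the type $p$ is taken and in verifying that finite satisfiability transfers the emptiness condition down to $G(M)$ rather than merely to $G(M_0)$; aligning these parameter sets, possibly by choosing $p$ to be finitely satisfiable in $M$ and definable over a small submodel, is where care is needed.
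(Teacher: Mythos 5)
There is a genuine gap at the very first step of your plan. You write that ``$X$ being generic is equivalent to its complement $W = G\setminus X$ being nongeneric (Fact 3.2)''. This is false: by Fact 3.2 genericity is equivalent to having positive $\mu$-measure, and a set of measure $1/2$ (say, a coset of a definable index-$2$ subgroup) and its complement are both generic. The nongeneric sets form a proper ideal, not the complement of an ultrafilter, so for a typical generic $X$ there is no way to apply Fact 3.6(ii) to $G\setminus X$ --- the set you want to feed into the ``$\bigcap_i g_i\cdot W=\emptyset$'' machinery simply need not be nongeneric, and the argument does not get off the ground. (Your descent step itself --- noting that $\bigcap_i g_i\cdot W=\emptyset$ is a first-order condition on $(g_1,\dots,g_n)$ over $M'$ and using finite satisfiability of $p^{(n)}$ in $M$ to replace the $g_i$ by elements $h_i\in G(M)$ --- is correct and is exactly what the paper does; the problem is what $W$ is.)

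The paper's proof manufactures a suitable nongeneric $W$ by invoking \emph{generic compact domination}: there is a coset $C$ of $G^{00}$ and a nongeneric definable $W$ with $C\subseteq X\cup W$. By compactness one finds $M$-definable sets $D$ with $C\subseteq D\subseteq X\cup W$ (after which one may assume $D=X\cup W$), and, after choosing the $g_i$ realizing $p^{(n)}|M'$ with $p$ concentrating on $G^{00}$ (so that $g_i\cdot C=C$), a further $M$-definable generic $D'$ with $C\subseteq D'\subseteq\bigcap_i g_i\cdot D$. The finite-satisfiability descent then yields $h_i\in G(M)$ with $\bigcap_i h_i\cdot W=\emptyset$ and $D'\subseteq\bigcap_i h_i\cdot D$, whence $D'\subseteq\bigcup_i h_i\cdot X$; the covering of $G(M)$ comes not from $\bigcap_i h_i\cdot W=\emptyset$ alone but from the fact that $D'$ is generic and $M$-definable, so finitely many $G(M)$-translates of $D'$ cover $G$. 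This layer of the argument --- the coset $C$, the approximating $M$-definable sets $D$ and $D'$, and generic compact domination itself --- is entirely missing from your proposal and is where the real content of the lemma lies.
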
 
\begin{proof} First generic compact domination  (Proposition 5.8 of \cite{NIPIII})  gives some coset $C$ of $G^{00}$ in $G$ and some nongeneric definable subset $W$ of $G$ such that $C\subseteq X\cup W$. Now $C$ is type-definable over $M$, hence by compactness there is definable subset $D$ of $G$, defined over $M$ such that $C\subset D$ and $D\subseteq X\cup W$. Note that as $C\subset D$, $D$ is generic. Note also that $D\cap W$ is nongeneric, hence $D\cap X$ is generic. Replacing $X$ by $D\cap X$ and $W$ by $D\cap W$ we may assume that $D = X\cup W$.  Let us suppose that all the data are defined over a model $M'> M$.  Let $p$ be a global generic type of $G$ such that $p(x)\models x\in G^{00}$. Let $(g_{1},..,g_{n})$ be a realization of $p^{(n)}|M'$ as in Fact 3.6 (as $W$ is nongeneric), namely $\cap_{i}g_{i}\cdot W = \emptyset$. As each $g_{i}\in G^{00}$, we have that $g_{i}\cdot C = C$ for each $i$ and hence
$\cap_{i}g_{i}\cdot D$ contains $C$, so again by compactness there is some definable over $M$ subset $D'$ of $G$ such that $C\subset D' \subseteq \cap_{i}g_{i}\cdot D$. And note that $D'$ is generic. Now $tp(g_{1},..,g_{n}/M')$ is finitely satisfiable in $M$, hence there are $h_{1},..,h_{n}\in G(M)$ such that
\newline
(i) $\cap_{i} h_{i}\cdot W = \emptyset$, and
\newline
(ii) $D' \subseteq  \cap_{i}h_{i}\cdot D$. 
\newline
As $D = X\cap W$ it follows from (i) and (ii) that  $D'\subseteq \cup_{i=1,..,n}h_{i}\cdot X$. But as $D'$ is generic and defined over $M$, finitely many left translates of $D'$ by elements of $G(M)$ cover $G$. Hence (as the $h_{i}$ are in $G(M)$) we see that $X$ is left generic in $G(M)$. The same proof gives right generic. 
\end{proof}

\vspace{2mm}
\noindent
Lemmas 3.5 and 3.7 give part (i) of Theorem 3.4. Parts (ii) and (iii) follow immediately.\\

We can now conclude the other main result:
\begin{Theorem} ($T$ has $NIP$, $G$ is an $\emptyset$-definable measure-stable group, and $M$ is any model.) Consider the $G(M)$-flow, $(S_{G}(M), G(M))$ (with $G(M)$ acting on the left), and let $(S,\cdot)$ be the enveloping Ellis semigroup. Then for any minimal closed left ideal $I$ of $S$ and idempotent $u\in I$, the group $u\cdot I$ with its quotient topology is isomorphic to the compact group $G/G^{00}$. 
\end{Theorem}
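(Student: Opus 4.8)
The plan is to identify the minimal closed left ideal explicitly and then reduce the computation of the ideal group to the definably amenable analysis of Section~2. Throughout I identify $S = E(S_{G}(M))$ with $S_{G,M}({\bar M})$ and with $S_{G,ext}(M)$ via Fact~1.6, and write $I$ for the set of generic types in $S$. Since a measure-stable group is in particular definably amenable and, by Fact~3.6(i), a global type is generic iff it is $f$-generic, Theorem~3.4 together with Fact~1.1 shows that every global generic type is finitely satisfiable in $M$ and so lies in $S$; moreover left and right generics coincide (Fact~3.2). Consequently $I$, equipped with the operation of Fact~1.6(ii), is exactly the Ellis semigroup of right $f$-generic types analysed in Proposition~2.5 --- the two prescriptions for the product agree because $tp(a\cdot b/{\bar M})$ depends only on $tp(a/{\bar M},b)$, which both fix to be $p|({\bar M},b)$. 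In particular I may quote from Section~2 that $I$ is a sub-semigroup of $S$ with no proper left ideal, and that for any idempotent $u\in I$ the group $u\cdot I$, with its quotient topology, is isomorphic as a topological group to $G/G^{00}$ (Lemma~2.4 and Proposition~2.5).

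The first real step is to show that $I$ is a \emph{closed two-sided} ideal of $S$. Closedness is immediate from the remark after Fact~3.2 that the non-generic definable sets form an ideal, so ``contains only generic formulas'' is a closed condition. For $S\cdot I\subseteq I$, take $p\in S$ and $q\in I$ and write $p\cdot q = tp(a\cdot b/{\bar M})$ with $b\models q$ and $a\models p|({\bar M},b)$. By the generic stability of $q$ (symmetry of the associated independence), $tp(b/{\bar M},a)$ is the generic type $q|({\bar M},a)$; since a left translate of a generic set is generic, $tp(a\cdot b/{\bar M},a)$, and hence its restriction $p\cdot q$, is generic. For $I\cdot S\subseteq I$, take $r\in I$ and $q\in S$ and write $r\cdot q = tp(a\cdot b/{\bar M})$ with $b\models q$ and $a\models r|({\bar M},b)$; now $tp(a/{\bar M},b)=r|({\bar M},b)$ is generic and $a\cdot b$ is a right translate of $a$ by $b$, so, since right translates of generic sets are generic, $tp(a\cdot b/{\bar M})$ is generic. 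This second direction is the more direct one, the generic factor being realized over ${\bar M}b$.

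Next I would extract minimality and uniqueness. Any closed left ideal $J$ of $S$ with $J\subseteq I$ satisfies $I\cdot J\subseteq S\cdot J\subseteq J$, so $J$ is a left ideal of the semigroup $I$, whence $J=I$ by Proposition~2.5; thus $I$ is a minimal closed left ideal of $S$. If $J$ is \emph{any} minimal closed left ideal, choose $q\in J$ and $r\in I$: then $r\cdot q$ lies in $J$ (as $J$ is a left ideal) and in $I$ (as $I$ is a right ideal), so $J\cap I$ is a nonempty closed left ideal contained in $J$; minimality of $J$ gives $J\subseteq I$, and minimality of $I$ then gives $J=I$. Hence $I$ is the unique minimal closed left ideal of $S$, and ``any minimal closed left ideal'' in the statement is $I$.

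It remains only to read off the ideal group. For an idempotent $u\in I$ the product $u\cdot I$ is computed entirely inside $I$, where the operation is that of Proposition~2.5, so $u\cdot I\cong G/G^{00}$ as groups, and the topological identification is exactly the one established in the paragraph preceding Proposition~2.5: the canonical continuous surjection $S_{G}({\bar M})\to G/G^{00}$ restricts to a quotient map on the compact space $I$ and factors through $q\mapsto u\cdot q$ followed by the coset bijection of Lemma~2.4, forcing the quotient topology on $u\cdot I$ to coincide with that of $G/G^{00}$. I expect the main obstacle to be the two-sided ideal verification, specifically controlling $tp(a\cdot b/{\bar M})$ when $b$ realizes a global generic type rather than an element of $G(M)$: this is precisely where the generic stability of measure-stable groups (uniqueness of generic extensions and translation-invariance of genericity) is genuinely used. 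Once that is in place, minimality, uniqueness, and the computation of the ideal group all reduce cleanly to Section~2 and Theorem~3.4.
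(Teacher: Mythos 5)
Your architecture is genuinely different from the paper's at the key step. You prove directly that the set $I$ of generic types is a closed \emph{two-sided} ideal of the semigroup $(S,\cdot)$ and then extract minimality and uniqueness from the fact (Lemma 2.3) that the $f$-generic semigroup has no proper left ideals. The paper instead invokes Fact 1.5(iii) to convert ``minimal closed left ideal of $E(S_{G}(M))$'' into ``minimal subflow of $(S_{G,ext}(M),G(M))$'', so that only closure under left translation by elements of $G(M)$ needs checking; uniqueness and minimality then come from a short density argument: if $Z$ is a generic externally definable set and $I'$ is any subflow, some $G(M)$-translate of any $q\in I'$ contains $Z$, whence every generic external type lies in $I'$. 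That route avoids all computations with the semigroup product on $S$. Your final reduction of the ideal group to Proposition 2.5 via Fact 3.6(i) and Theorem 3.4 is the same as the paper's, and your remark about why the two product prescriptions agree is exactly what the paper compresses into ``the relevant definitions of the semigroup operations''.

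The one genuine flaw is your justification of $S\cdot I\subseteq I$. You appeal to ``the generic stability of $q$'' to obtain the symmetry $tp(b/{\bar M},a)=q|({\bar M},a)$. But the generic types of a measure-stable group are not in general generically stable \emph{types}: for a definably compact group in an o-minimal theory --- the motivating example of a measure-stable group --- there are no nonrealized generically stable types at all, whereas generic types are never realized. What is generically stable is the invariant measure $\mu$ of Fact 3.1(ii), not its generic types, so the symmetry lemma you invoke is unavailable (and the displayed identity of types is itself doubtful in general). Fortunately the conclusion you need is true and has a direct proof using only finite satisfiability of $p$ in $M$ and translation-invariance of genericity: if $\phi(x)$ is over ${\bar M}$ and $\models\phi(a\cdot b)$, then since $tp(a/{\bar M},b)$ is finitely satisfiable in $M$ there is $a'\in G(M)$ with $\models\phi(a'\cdot b)$, so $\phi(a'\cdot x)\in q$ is a generic formula, and hence $\phi(x)$, being a left translate of it, is generic. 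With that substitution (your argument for $I\cdot S\subseteq I$, which rests on Fact 2.1(i) and Fact 3.6(i) rather than on any symmetry, is fine as written), the rest of your proof --- minimality, uniqueness of the minimal closed left ideal, and the identification of $u\cdot I$ with $G/G^{00}$ as a topological group --- goes through.
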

\begin{proof} The proof is just like that of Proposition 4.8 in \cite{Newelski2} making use of Theorem 3.4 above in place of Lemma 4.6 of 
\cite{Newelski2}.  But for completeness we go through some of the details.
\newline
First by Fact 1.6 we know that $(S,\cdot)$ coincides as an Ellis semigroup with the space $(S_{G,M}({\bar M}), \cdot)$ of global types concentrating on $G$ and finitely satisfiable in $M$, where $p\cdot q = tp(a\cdot b/{\bar M})$ with $b$ realizing $q$ and $a$ realizing $p|{\bar M},b)$. Also with $S_{G,ext}(M)$ with the corresponding semigroup operation. Moreover, as  a $G(M)$-flow, $S$ coincides with  $S_{G,M}({\bar M}),G(M))$ (or $S_{G,ext}(M), G(M))$) with the natural left action of $G(M)$. 

\vspace{2mm}
\noindent
{\em Claim}  $(S,\cdot)$ has a unique (closed) left ideal $I$, consisting of the global generic types (in $S_{G,M}({\bar M})$), equivalently by Theorem 3.4 the (externally) generic types in $S_{G,ext}(M)$. 
\newline
{\em Proof.} By virtue of the above identifications, and Fact 1.5 (iii) it suffices to prove that that the class $I$ say of generic types in $S_{G,ext}(M)$ is the unique minimal subflow of $(S_{G,ext}(M),G(M))$. First we know $I$ to be is closed. Now let $I'$ be any subflow of $(S_{G,ext}(M),G(M))$. Let $p\in (S_{G,ext}(M)$ be generic, and let the externally definable set $Z\subseteq G(M)$ be in $p$. As $Z$ is generic, for any $q\in I'$, some left translate $g\cdot q$ of $q$ by some $g\in G(M)$ contains $Z$. Hence as $I'$ is a subflow, $Z$ is contained in some 
$q'\in I'$. As $I'$ is closed, $p\in I'$. Hence $I\subseteq I'$ and the claim is proved.

\vspace{2mm}
\noindent
Now $G$ is definably amenable, and by Fact 3.6 (i) and relevant definitions of the semigroup operations, the Ellis semigroup $(I,\cdot)$ of global generic types, coincides with the Ellis semigroup of right $f$-generic types of $G$ considered in section 2 (see Lemma 2.2). Hence applying Proposition 2.5  completes the proof of Theorem 3.8.

\end{proof}

\end{document}